\newcommand{\mcm}[3]{\newcommand{#1}[#2]{{\ensuremath{#3}}}} 
\mcm{\tuple}{1}{\langle #1 \rangle}
\mcm{\name}{1}{\ulcorner #1 \urcorner}
\mcm{\Nbb}{0}{\mathbb{N}}
\mcm{\Zbb}{0}{\mathbb{Z}}
\mcm{\Rbb}{0}{\mathbb{R}}
\mcm{\Cbb}{0}{\mathbb{C}}
\mcm{\Qbb}{0}{\mathbb{Q}}
\mcm{\Bcal}{0}{\cal B}
\mcm{\Ccal}{0}{\cal C}
\mcm{\Dcal}{0}{\cal D}
\mcm{\Ecal}{0}{\cal E}
\mcm{\Fcal}{0}{\cal F}
\mcm{\Gcal}{0}{\cal G}
\mcm{\Hcal}{0}{\cal H}
\mcm{\Ical}{0}{\cal I}
\mcm{\Lcal}{0}{\cal L}
\mcm{\Mcal}{0}{\cal M}
\mcm{\Ncal}{0}{\cal N}
\mcm{\Pcal}{0}{{\cal P}}
\mcm{\Scal}{0}{{\cal S}}
\mcm{\Tcal}{0}{{\cal T}}
\mcm{\Ucal}{0}{{\cal U}}
\mcm{\Vcal}{0}{{\cal V}}
\mcm{\Ycal}{0}{{\cal Y}}
\mcm{\Mfrak}{0}{\mathfrak M}
\mcm{\restric}{0}{\upharpoonright}
\mcm{\upset}{0}{\uparrow}
\mcm{\onto}{0}{\twoheadrightarrow}
\mcm{\smallNbb}{0}{{\small \mathbb{N}}}
\DeclareMathOperator{\preop}{op}
\mcm{\op}{0}{^{\preop}}
\newcommand{\theoremize}[2]{\newaliascnt{#1}{thm} \newtheorem{#1}[#1]{#2} \aliascntresetthe{#1}}
\theoremstyle{plain}
\newtheorem{thm}{Theorem}[section]
\theoremstyle{definition}
\theoremstyle{plain}
\title{\scshape Matroids with an infinite circuit-cocircuit intersection}
\author{Nathan Bowler \and Johannes Carmesin}
\begin{document}
 
\maketitle

 \begin{abstract}
 We construct some matroids that have a circuit and a cocircuit with infinite intersection.

This answers a question of Bruhn, Diestel, Kriesell, Pendavingh and Wollan.
It further shows that the axiom system for matroids proposed by Dress in 1986 does not axiomatize all infinite matroids.

We show that one of the matroids we define is a thin sums matroid whose dual is not a thin sums matroid,
answering a natural open problem in the theory of thin sums matroids.

\end{abstract}

\section{Introduction}

In \cite{matroid_axioms}, Bruhn, Diestel, Kriesell, Pendavingh and Wollan introduced axioms for infinite matroids 
in terms of independent sets, bases, circuits, closure and (relative) rank. 
These axioms allow for duality of infinite matroids as known from finite matroid theory, which settled an old problem of Rado.
Unlike the infinite matroids known previously, such matroids can have infinite circuits or infinite cocircuits.
Many infinite matroids are \emph{finitary}, that is, every circuit is finite, or \emph{cofinitary}, that is, every cocircuit is finite, 
but nontrivial matroids with both infinite circuits and infinite cocircuits have been known for some time \cite{{matroid_axioms},{Higgs69_2}}.

However in all the known examples, all intersections of circuit with cocircuit are finite.
Moreover, this finiteness seems to be a natural requirement in many theorems \cite{{THINSUMS},{BC:rep_matroids}}.
This phenomenon prompted the authors of \cite{matroid_axioms} to ask the following.

\begin{que}[\cite{matroid_axioms}]\label{open}
Is the intersection of a circuit with a cocircuit in an infinite matroid always finite?
\end{que}

Dress \cite{dress} even thought that the very aim to have infinite matroids with duality, as in Rado's problem, 
would make it necessary that circuit-cocircuit intersection were finite. 
He therefore proposed axioms for infinite matroids which had the finiteness of circuit-cocircuit intersections built into the definition 
of a matroid, in order to facilitate duality.


And indeed, it was later shown by Wagowski \cite{wagowski} that the axioms proposed by Dress capture
all infinite matroids as axiomatised in \cite{matroid_axioms} if and only if \autoref{open} has a positive answer.
We prove that the assertion of \autoref{open} is false and consequently that the axiom system for matroids proposed by Dress does not capture all matroids.


\begin{thm}\label{main}
There exists a matroid $M$ that has a circuit $C$ and a cocircuit $D$ such that $|C\cap D|=\infty$.
\end{thm}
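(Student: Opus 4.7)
The strategy is to give an explicit construction of $M$ on a countably infinite ground set and then exhibit the required circuit $C$ and cocircuit $D$ by direct inspection. Since the well-understood families of nontrivial infinite matroids (finitary and cofinitary matroids, the topological cycle matroid of a graph, and the representable matroids the authors of \cite{matroid_axioms} work with) all satisfy $|C\cap D|<\infty$ automatically, the construction cannot lie in any of these classes; the combinatorial data defining $M$ must therefore be crafted very deliberately.

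Concretely, my plan is to take $E$ with a highly structured tree-like or ladder-like partition — for instance $E=\{a_i,b_i:i\in\Nbb\}$, or $E$ indexed by the edges of an infinite rooted binary tree — and define the family $\Ical$ of independent sets of $M$ either by an explicit combinatorial rule or as a thin-sums matroid over $\mathbb{F}_2$. The coordinates will be chosen so that a designated infinite set $C\subseteq E$ is the support of a minimal thin-sum relation $\sum_{e\in C}v_e=0$, while a designated infinite set $D\subseteq E$ is the support of a dual functional annihilating the span of $E\setminus D$. Both $C$ and $D$ will be infinite and the intersection $C\cap D$ will be infinite by design.

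The bulk of the work is verifying that $(E,\Ical)$ really is a matroid in the sense of \cite{matroid_axioms}, i.e.\ that $\Ical$ satisfies all the independence-set axioms, including the crucial axiom IM asserting the existence of maximal elements of $\Ical$ inside any $X\subseteq E$ containing a given independent set. IM is what makes infinite matroid theory subtle: naive definitions of independence via a thin-sum condition typically fail it, and it is the axiom that forces duality. I would verify IM by a greedy argument exploiting the tree/ladder structure, processing layers one at a time and using the controlled form of the defining relations to show that no forbidden dependency is ever created along the way.

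Once $M$ is known to be a matroid, identifying $C$ as a circuit is then mostly a bookkeeping check: $C$ is dependent by construction, and the minimality of the supporting thin-sum relation forces every $C\setminus\{e\}$ to be independent, making $C$ a circuit. A dual argument in $M^*$ handles $D$. The main obstacle is unquestionably the IM axiom; designing the ground set and the vectors $v_e$ so that IM holds \emph{and} the desired infinite $C\cap D$ exists is precisely what previous work did not manage and is the real content of the theorem. Everything else — basic matroid bookkeeping and duality — should be routine given the right construction.
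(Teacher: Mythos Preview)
Your proposal identifies the right difficulty --- axiom (IM) --- but does not actually overcome it. You describe a strategy (build a thin-sums system on a ladder-like ground set, then check (IM) by a greedy layer-by-layer argument), yet you supply neither the explicit family $(v_e)$ nor the promised greedy verification. Since thin-sums systems are known \emph{not} to satisfy (IM) in general, and since you yourself say that making (IM) hold while forcing an infinite $C\cap D$ is ``the real content of the theorem,'' what you have written is a description of the problem rather than a solution to it. There is no indication that the greedy argument you allude to would succeed for any particular choice of vectors.

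The paper's approach is quite different and sidesteps any direct verification of (IM). It starts from a matroid already known to exist --- the algebraic cycle matroid $M_A(G)$ of a suitable locally finite graph $G$ (a ladder with a loop, or a doubled ladder) --- and applies an operation already proved in \cite{union2} to take matroids to matroids. Two such operations are used: (i) $M \mapsto M^+$, where the bases of $M^+$ are the sets $B+e$ with $B$ a base of $M$ and $e\notin B$; and (ii) matroid union $M\vee M$, which is again a (nearly finitary) matroid whenever $M$ is nearly finitary. Once the output is in hand, exhibiting an explicit circuit and cocircuit with infinite intersection is a short combinatorial check using the concrete description of circuits of $M^+$ (unions $O_1\cup O_2$ with $O_1$ an $M$-circuit and $O_2$ an $M/O_1$-circuit) and of cocircuits ($M^*$-bases, since $(M^+)^*=(M^*)^-$). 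The matroid axioms themselves are never touched.

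So the key idea you are missing is to \emph{inherit} (IM) from a known matroid-producing operation rather than attempt to verify it from scratch. Incidentally, the paper later shows that the first example $M^+$ \emph{is} a thin-sums matroid over $\Qbb$, so your instinct that such a description exists is correct; but the proof that it is a matroid does not proceed via that description.
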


To construct such matroids $M$, we use some recent result from an investigation of matroid union \cite{union2}. 

We call a matroid \emph{tame} if the intersection of any circuit with any cocircuit is finite, and otherwise \emph{wild}.
We hope that the wild matroids we construct here may be sufficiently badly behaved
to serve as generic counterexamples also for other open problems.
To illustrate this potential, we shall show that we do obtain a counterexample
to a natural open question about thin sums matroids, a generalisation of
representable matroids.

If we have a family of vectors in a vector space, we get a matroid structure on that family whose independent sets are given by 
the linearly independent subsets of the family. Matroids arising in this way are called {\em representable} matroids. Although 
many interesting finite matroids (eg. all graphic matroids) are representable, it is clear that any representable matroid is finitary and so many 
interesting examples of infinite matroids are not of this type. However, since the construction of many of these examples, including the algebraic cycle matroids of infinite graphs, is suggestively similar to that of representable matroids, the notion of {\em thin sums matroids} was introduced in \cite{RD:HB:graphmatroids}: it is a generalisation of representability which captures these infinite examples.

Since thin sums matroids need not be finitary, and the duals of many thin sums matroids are again thin sums matroids, it is natural to ask whether the class of thin sums matroids itself is closed under duality. 
It is shown in \cite{THINSUMS} that the class of tame thin sums matroids is closed under duality, 
so that any counterexample must be wild. We show below that one of the wild matroids we have constructed does give a counterexample.

\begin{thm}
There exists a thin sums matroid whose dual is not a thin sums matroid.
\end{thm}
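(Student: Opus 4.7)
The plan is to pick out a specific matroid $M$ from the construction used for \autoref{main} and verify (i) that $M$ is a thin sums matroid while (ii) $M^*$ is not. For (i), I expect a thin family $(v_e)_{e \in E(M)}$ realising $M$ to be readable off directly from the concrete combinatorial construction: the wild matroids in this paper are obtained from the matroid union results of \cite{union2} applied to very explicit building blocks, and the corresponding vectors together with a combinatorial check that their minimal thin dependences are exactly the circuits of $M$ should suffice.

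The main content lies in (ii), which I would attack by contradiction using a wild pair $(C,D)$ with $|C \cap D| = \infty$. A putative thin sums representation $(w_e)_{e \in E}$ of $M^*$ equips every circuit of $M^*$, in particular the cocircuit $D$ of $M$, with a coefficient family $(c_e)_{e \in D}$ witnessing a thin dependence. In parallel, the representation of $M$ from (i) attaches to $C$ a coefficient family $(d_e)_{e \in C}$. In the tame case, \cite{THINSUMS} exploits precisely such paired families via an orthogonality identity of the shape $\sum_{e \in C \cap D} c_e d_e = 0$ to produce a dual thin sums representation. The plan is to show that for the particular $C$ and $D$ coming from our construction, no matter what thin sums representation of $M^*$ is tried, one of two things must go wrong: either the thinness condition fails at some coordinate (so that $(w_e)$ is not a legal thin family), or the ensuing infinite ``orthogonality'' sum is simultaneously forced to vanish (because $D$ is a circuit of $M^*$) and to be nonzero (forced by the behaviour of $C$ in $M$).

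The main obstacle is that one must rule out \emph{every} thin sums representation of $M^*$ over \emph{every} field, so the argument cannot hinge on any single candidate. Instead, one has to extract a structural invariant from the wild pair $(C,D)$ — essentially, an arithmetic feature forced by the combinatorics of how $C$ meets $D$ in the construction — that no thin sums data can reconcile with $M^*$. Once this invariant is pinned down, the contradiction should be mechanical; locating it, and arguing in a representation-free way, is where the real work lies.
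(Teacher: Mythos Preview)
Your plan for part (i) is essentially what the paper does: it takes the matroid $M^+$ built from the algebraic cycle matroid of the ladder-with-loop, writes down an explicit family $(f_e)$ over $\Qbb$, and checks directly that $M_f = M^+$.

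Part (ii), however, has a real gap. The orthogonality relation $\sum_{e \in C \cap D} c_e d_e = 0$ you want to exploit links circuit coefficients and cocircuit coefficients coming from a \emph{single} representation (or from the dual representation constructed out of it, as in \cite{THINSUMS}). In your setup the $d_e$ come from the fixed $\Qbb$-representation of $M^+$ built in (i), while the $c_e$ come from an \emph{arbitrary} putative representation of $(M^+)^*$ over an \emph{arbitrary} field $k$. These two families are entirely unrelated---they need not even live over the same field---so there is no identity tying them together, and no ``forced to vanish / forced to be nonzero'' contradiction to extract from the single pair $(C,D)$.

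The paper's argument is structurally different and worth noting. It works \emph{entirely inside} the hypothetical representation $(f_e)$ of $(M^+)^*$ and never refers back to the representation of $M^+$. It uses not one cocircuit but an infinite family of small ones (the two- and three-element skew cuts $\{r_0,p_0\}$ and $\{p_{i-1},r_i,p_i\}$), each giving a local linear relation among the $f_e(a)$. Telescoping these yields $\nu_i f_{p_i}(a) = \sum_{j \le i} \mu_j f_{r_j}(a)$ with all $\mu_j,\nu_i$ nonzero. The single infinite circuit (the one containing all $p_i$) is used only to force $f_{p_i}(a)=0$ for cofinitely many $i$, via the thinness of its witnessing dependence. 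Together these show that $(\mu_i)$ is a thin dependence supported on $\{r_i : i \in \Nbb\}$. But that set is a \emph{basis} of $(M^+)^*$, giving the contradiction. So the key idea you are missing is: use many auxiliary circuits of $(M^+)^*$ to manufacture a thin dependence on an independent set, rather than trying to pair one circuit against one cocircuit across two representations.
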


The paper is organised as follows.
In Section 2, we recall some basic matroid theory.
After this, in Section 3, we give the first example of a wild matroid.
In Section 4, we give a second example, which is obtained by taking the union of a matroid with itself.
In Section 5, we show that the class of thin sums matroids is not
closed under duality by construct a suitable wild thin sums matroid whose dual is not a thin sums matroid.

\section{Preliminaries}\label{prelim}

Throughout, notation and terminology for graphs are that of~\cite{DiestelBook10}, for matroids  that of~\cite{Oxley,matroid_axioms}.
A set system $\Ical$ is the set of independent sets of a matroid if it satisfies the following \emph{independence axioms}~\cite{matroid_axioms}.
\begin{itemize}
	\item[(I1)] $\emptyset\in \Ical$.
	\item[(I2)] $\Ical$ is closed under taking subsets.
	\item[(I3)] Whenever $I,I'\in \Ical$ with $I'$ maximal and $I$ not maximal, there exists an $x\in I'\setminus I$ such that $I+x\in \Ical$.
	\item[(IM)] Whenever $I\subseteq X\subseteq E$ and $I\in\Ical$, the set $\{I'\in\Ical\mid I\subseteq I'\subseteq X\}$ has a maximal element.
\end{itemize}

$M$ always denotes a matroid and $E(M)$, $\Ical(M)$, $\Bcal(M)$, $\Ccal(M)$ and $\Scal(M)$ denote its ground set and its sets of 
independent sets, bases, circuits and spanning sets, respectively.
A matroid is called \emph{finitary} if every circuit is finite.

In our constructions, we will make use of algebraic cycle matroid $M_A(G)$ of a graph $G$.
The circuits of $M_A(G)$ are the edge sets of finite cycles of $G$
and the edge sets of double rays\footnote{A \emph{double ray} is a two sided infinite path}.
If $G$ is locally finite, then $M_A(G)$ is \emph{cofinitary}, that is, its dual is finitary \cite{union2}.
If $G$ is not locally finite, then this is no longer true \cite{matroid_axioms}.
Higgs \cite{Higgs69_2} characterized those graphs $G$ that have an algebraic cycle matroid, that is, whose finite circuits and double rays from the circuits of a matroid:
$G$ has an algebraic cycle matroid if and only if $G$ does not contain a
subdivision of the Bean-graph, see \autoref{bean}.

   \begin{figure} [htpb]   
\begin{center}
   	  \includegraphics[width=10cm]{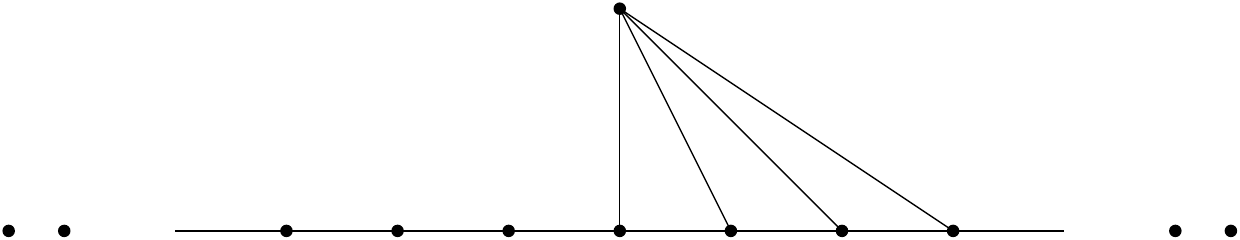}
   	  \caption{The Bean-graph}
   	  \label{bean}
\end{center}
   \end{figure}

\section{First construction: the matroid $M^+$}\label{+}

In this example, we will need the following construction from \cite{union2}:

\begin{dfn}\label{M+}
Let $M$ be a matroid, in which $\emptyset$ isn't a base. Then the matroid $M^-$, on the same groundset, is that whose bases are those obtained by removing a point from a base of $M$. That is, $\Bcal(M^-) = \{B - e | B \in \Bcal(M), e \in B\}$. Dually, if $M$ is a matroid whose ground set $E$ isn't a base, we define $M^+$ by $\Bcal(M^+) = \{B + e | B \in \Bcal(M), e \in E \setminus B\}$.
\end{dfn}

Thus $(M^+)^* = (M^*)^-$.


We shall show that the matroids constructed in this way are very often wild.

Since $M^-$ is obtained from $M$ by making the bases of $M$ into dependent sets, we may expect that $\Ccal(M^-) = \Ccal(M) \cup \Bcal(M)$: 
that is, the set of circuits of $M^-$ contains exactly the circuits and the bases of $M$. This is essentially true, but there is one complication: 
an $M$-circuit might include an $M$-base, which would prevent it being an $M^-$-circuit.  Let $O$ be a circuit of $M^-$. If $O$ is $M$-independent, 
it is clear that $O$ must be an $M$-base. Conversely, any $M$-base is a circuit of $M^-$. If $O$ is $M$-dependent, then since all proper subsets of 
$O$ are $M^-$-independent and so $M$-independent, $O$ must be an $M$-circuit. Conversely, an $M$-circuit not including an $M$-base is an $M^-$-circuit. 

 On the other hand, none of the circuits of $M$ is a circuit of $M^+$: for any circuit $O$ of $M$, pick any $e \in O$ and extend $O - e$ 
to a base $B$ of $M$. Then $O \subseteq B + e$, so $O \in \Ical(M^+)$. In fact, a circuit of $M^+$ is a set minimal with the property that 
at least two elements must be removed before it becomes $M$-independent. 
To see this note that the independent sets of $M^+$ are those sets from which an $M$-independent set can be obtained by removing at most one element.

Now we are in a position to construct a wild matroid: 
let $M$ be the algebraic cycle matroid of the graph in \autoref{fig:second_wild}. Then the dashed edges form a circuit in $M^+$, and the bold edges form a circuit in $(M^+)^* = (M^*)^-$ (they form a base in $M^*$ since their complement forms a base in $M$). The intersection, consisting of the dotted bold edges, is evidently infinite.

   \begin{figure} [htpb]   
\begin{center}
   	  \includegraphics[width=10cm]{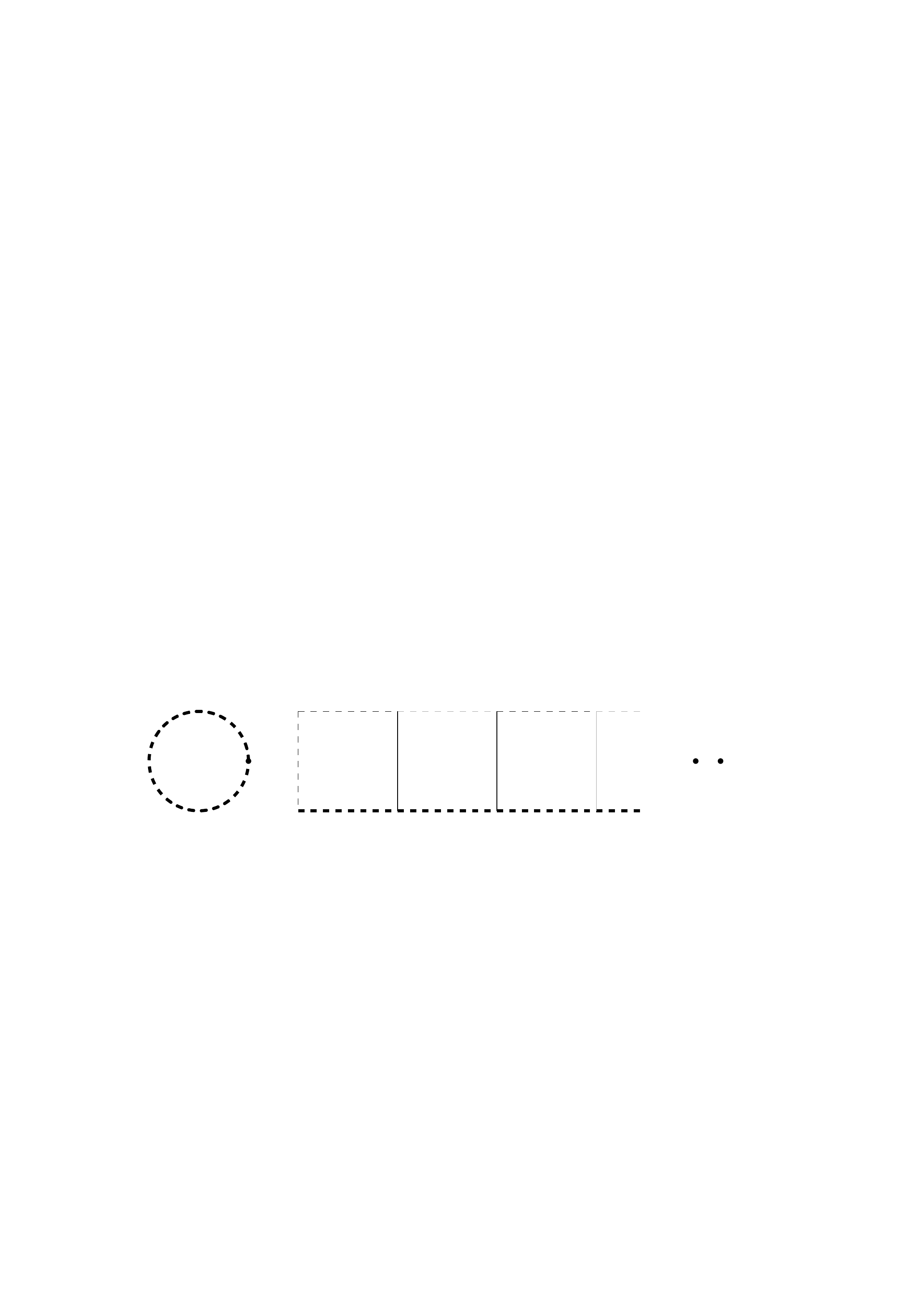}
   	  \caption{A circuit and a cocircuit with infinite intersection}
   	  \label{fig:second_wild}
\end{center}
   \end{figure}

For the remainder of this section, we will generalize this example to construct a large class of wild matroids.
To do so, we first have a closer look at the circuits of $M^+$.
It is clear that if $M$ is the finite cycle matroid of a graph $G$, 
then we get as circuits of $M^+$ any subgraphs which are subdivisions of those
in \autoref{fig:fin_cyc}.

  \begin{figure} [htpb]   
\begin{center}
   	  \includegraphics[width=10cm]{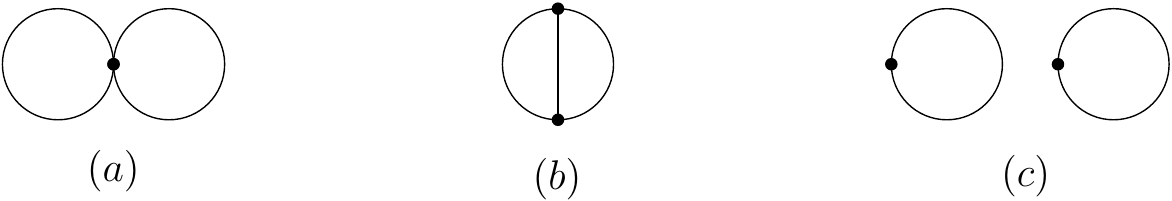}
   	  \caption{Shapes of circuits in $M^+$, with $M$ a finite cycle matroid}
   	  \label{fig:fin_cyc}
\end{center}
   \end{figure}

More generally, we can make precise a sense in which every circuit of $M^+$ is obtained by sticking together two circuits.

\begin{lem}\label{ind+}
Let $O$ be a circuit of $M$, and $I \subseteq E(M) \setminus O$. Then $O \cup I$
is $M^+$-independent iff $I$ is $M/O$-independent.
\end{lem}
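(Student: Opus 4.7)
The plan is to unpack the definitions on both sides and reduce the claim to a statement about $M$-independence after deleting a single element of $O$.

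First I would recall the characterisation already given in the excerpt: the $M^+$-independent sets are precisely those sets from which one can remove at most one element to obtain an $M$-independent set. Now $O \cup I$ contains the $M$-circuit $O$, so it is $M$-dependent, meaning that removing zero elements cannot work; exactly one element must be removed. Moreover, if this element $e$ were in $I$ then $(O \cup I) - e$ would still contain $O$ and hence be $M$-dependent. So $e$ must lie in $O$, and we obtain the following reformulation:
\[
O \cup I \text{ is } M^+\text{-independent} \iff \exists\, e \in O \text{ such that } (O - e) \cup I \text{ is } M\text{-independent}.
\]

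Next I would match this with the definition of independence in the contraction $M/O$. The key observation is that, since $O$ is an $M$-circuit, the sets of the form $O - e$ for $e \in O$ are exactly the maximal $M$-independent subsets of $O$: each $O - e$ is $M$-independent by the circuit elimination property, and $\mathrm{cl}_M(O - e) \ni e$ so $O - e$ spans $O$. Thus $O - e$ is a base of $O$ in $M$. Using the standard definition of contraction for infinite matroids, $I \subseteq E(M) \setminus O$ is $M/O$-independent precisely when $B \cup I$ is $M$-independent for some (equivalently, any) base $B$ of $O$ in $M$, i.e.\ precisely when there exists $e \in O$ with $(O - e) \cup I$ being $M$-independent. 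Combining with the reformulation above gives the lemma.

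The only subtle point is the passage between ``some base of $O$'' and ``every base of $O$'' in the definition of $M/O$-independence, since we are working in a possibly infinite matroid. For our purposes only the ``some'' direction is needed, which is immediate from the definition of contraction via bases, so no obstacle arises. The routine verification that $O - e$ is a base of $O$ is straightforward from the circuit axioms. Overall, there is no substantial obstacle: the proof is essentially a matching of two equivalent characterisations once the correct reformulation of $M^+$-independence in the presence of a known circuit is made explicit.
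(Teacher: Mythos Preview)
Your proof is correct and follows essentially the same route as the paper. Both arguments boil down to the chain of equivalences: $O\cup I$ is $M^+$-independent $\iff$ there is $e\in O$ with $(O-e)\cup I$ $M$-independent $\iff$ $I$ is $M/O$-independent (using that the sets $O-e$ are precisely the bases of $O$). The only cosmetic difference is that the paper works directly with $M^+$-bases $B'+e$ and $M/O$-bases, whereas you pass through the ``remove at most one element'' description of $M^+$-independence; the content is the same.
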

\begin{proof}
If: Extend $I$ to a base $B$ of $M/O$. Pick any $e \in O$. Then $B' = B \cup O - e$ is a basis of $M$ and $O \cup I \subseteq B' + e$.

Only if: Pick $B$ a base of $M$ and $e \in E \setminus B$ such that $O \cup I \subseteq B \cup e$. Since $O$ is dependent, we must have $e \in O$, and so $I \subseteq B \setminus O$. Finally, $B \setminus O$ is a base of $M/O$, since $B \cap O = O - e$ is a base of $O$.
\end{proof}

\begin{lem}\label{circ+}
Let $O_1$ be a circuit of $M$, and $O_2$ a circuit of $M/O_1$. Then $O_1 \cup O_2$ is a circuit of $M^+$. Every circuit of $M^+$ arises in this way.
\end{lem}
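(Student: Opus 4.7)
The plan is to leverage \autoref{ind+} as the main tool for both directions; its content is that for any $M$-circuit $O$ and disjoint $I$, the set $O \cup I$ is $M^+$-independent precisely when $I$ is $M/O$-independent. Both halves of the claim will then be short applications of this principle, plus one place where I have to unpack the definition of contraction.

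For the first assertion, I first show $O_1 \cup O_2$ is $M^+$-dependent: since $O_2$ is a circuit of $M/O_1$ and hence $M/O_1$-dependent, \autoref{ind+} gives it immediately. For minimality, I check that $(O_1 \cup O_2) - x$ is $M^+$-independent for every $x$. If $x \in O_2$, then $O_2 - x$ is $M/O_1$-independent by minimality of the $M/O_1$-circuit $O_2$, so \autoref{ind+} applies directly with $I = O_2 - x$. If $x \in O_1$, then $O_1 - x$ is a base of $O_1$ in $M$ (by minimality of the $M$-circuit $O_1$); picking any $f \in O_2$, the set $O_2 - f$ is $M/O_1$-independent, which unpacks to say that $(O_1 - x) \cup (O_2 - f)$ is $M$-independent, and hence $(O_1 - x) \cup O_2$ is $M^+$-independent (add $f$ back). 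Closure of independence under subsets then gives minimality of $O_1 \cup O_2$.

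For the converse, let $C$ be any circuit of $M^+$. Then $C$ is $M^+$-dependent and therefore not $M$-independent, so it contains some $M$-circuit $O_1$. Writing $C = O_1 \cup (C \setminus O_1)$ and applying \autoref{ind+}, the $M^+$-dependence of $C$ forces $C \setminus O_1$ to be $M/O_1$-dependent, so it contains an $M/O_1$-circuit $O_2$. By the first half, $O_1 \cup O_2 \subseteq C$ is itself an $M^+$-circuit, and the minimality of $C$ in $M^+$ yields $C = O_1 \cup O_2$.

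The only point where care is needed is the $x \in O_1$ case in the minimality argument, since \autoref{ind+} does not apply with $O_1 - x$ in place of $O_1$ (it is no longer a circuit); I bridge the gap by choosing a companion element $f \in O_2$ and using the fact that $O_1 - x$ serves as a base of $O_1$ in $M$. Everything else is routine given \autoref{ind+}.
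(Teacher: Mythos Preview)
Your proof is correct and follows essentially the same approach as the paper: both directions hinge on \autoref{ind+}, and your treatment of the $x \in O_1$ case (unpacking contraction via the base $O_1 - x$ and adding back $f$) is exactly the paper's argument phrased slightly differently. The only cosmetic difference is in the converse: the paper argues directly via \autoref{ind+} that $O \setminus O_1$ is itself an $M/O_1$-circuit, whereas you locate an $M/O_1$-circuit $O_2 \subseteq C \setminus O_1$ and then invoke the first half plus minimality of $C$ to force $C = O_1 \cup O_2$; both routes are equally valid and equally short.
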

\begin{proof}
$O_1 \cup O_2$ is $M^+$-dependent by Lemma \ref{ind+}. Next, we shall show that any set $O_1 \cup O_2 - e$ obtained by removing a single element from $O_1 \cup O_2$ is $M^+$-independent, and so that $O_1 \cup O_2$ is a {\em minimal} dependent set (a circuit) in $M^+$. The case $e \in O_2$ is immediate by Lemma \ref{ind+}. If $e \in O_1$, then we pick any $e' \in O_2$. Now extend $O_2 -e'$ to a base $B$ of $M / O_1$. Then $B' = B \cup O_1 - e$ is a base of $M$ and $O_1 \cup O_2 - e \subseteq B' + e'$.

Finally, we need to show that any circuit $O$ of $M^+$ arises in this way. $O$ must be $M$-dependent, and so we can find a circuit $O_1 \subseteq O$ of $M$. Let $O_2 = O \setminus O_1$: $O_2$ is a circuit of $M/O_1$ by Lemma \ref{ind+}.
\end{proof}

\begin{cor}\label{2circs}
Any union of two distinct circuits of $M$ is dependent in $M^+$.
\end{cor}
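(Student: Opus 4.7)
The plan is to reduce to Lemma~\ref{ind+} by writing $O_1 \cup O_2$ as the disjoint union $O_1 \sqcup (O_2 \setminus O_1)$. By that lemma, it suffices to show that $O_2 \setminus O_1$ is $M/O_1$-dependent, which will force $M^+$-dependence of $O_1 \cup O_2$ via the contrapositive of the ``if'' direction of Lemma~\ref{ind+}.

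The key input is that distinct $M$-circuits form an antichain under inclusion, so $O_1 \neq O_2$ produces some $e \in O_1 \setminus O_2$. Since $O_1$ is an $M$-circuit, $O_1 - e$ is a maximal $M$-independent subset of $O_1$. To witness $M/O_1$-dependence of $O_2 \setminus O_1$, I would then check that
\[
(O_2 \setminus O_1) \cup (O_1 - e) \;=\; (O_1 \cup O_2) - e
\]
is $M$-dependent; this is immediate because $e \notin O_2$, so the right-hand side contains the $M$-circuit $O_2$.

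The only point that needs a little unwinding is the translation from ``$O_2 \setminus O_1$ is $M/O_1$-dependent'' to ``$(O_2 \setminus O_1) \cup (O_1 - e)$ is $M$-dependent'', but this is exactly what Lemma~\ref{ind+} delivers once one specialises its ``if'' direction to the explicit base $O_1 - e$ of $O_1$. I do not expect a serious obstacle here; in particular, no case split on whether $O_1 \cap O_2$ is empty is required, since the argument uses only that $O_1 \setminus O_2$ is nonempty.
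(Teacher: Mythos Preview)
Your argument is correct and is essentially the route the paper has in mind: the corollary is stated immediately after Lemma~\ref{circ+} with no separate proof, and the intended derivation is precisely that $O_2 \setminus O_1$ is $M/O_1$-dependent (so it contains an $M/O_1$-circuit, making $O_1 \cup O_2$ contain an $M^+$-circuit by Lemma~\ref{circ+}). You bypass Lemma~\ref{circ+} and appeal directly to Lemma~\ref{ind+}, which is slightly more economical but amounts to the same thing.

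One small correction of attribution: the step ``$(O_2 \setminus O_1) \cup (O_1 - e)$ is $M$-dependent $\Rightarrow$ $O_2 \setminus O_1$ is $M/O_1$-dependent'' is not what Lemma~\ref{ind+} says; it is simply the definition of independence in the contraction $M/O_1$ (a set is $M/O_1$-independent iff its union with a base of $M \restric O_1$ is $M$-independent). Lemma~\ref{ind+} is what you use in the \emph{first} reduction, from $M^+$-dependence of $O_1 \cup O_2$ to $M/O_1$-dependence of $O_2 \setminus O_1$. With that rewording the proof is clean.
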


It follows from Lemma \ref{circ+} that the subgraphs of the types illustrated in \autoref{fig:fin_cyc} give all of the circuits of $M^+$ for $M$ a finite cycle matroid. Similarly, subdivisions of the graphs in \autoref{fig:fin_cyc} and \autoref{fig:alg_cyc} give circuits in the algebraic cycle matroid of a graph.

  \begin{figure} [htpb]   
\begin{center}
   	  \includegraphics[width=10cm]{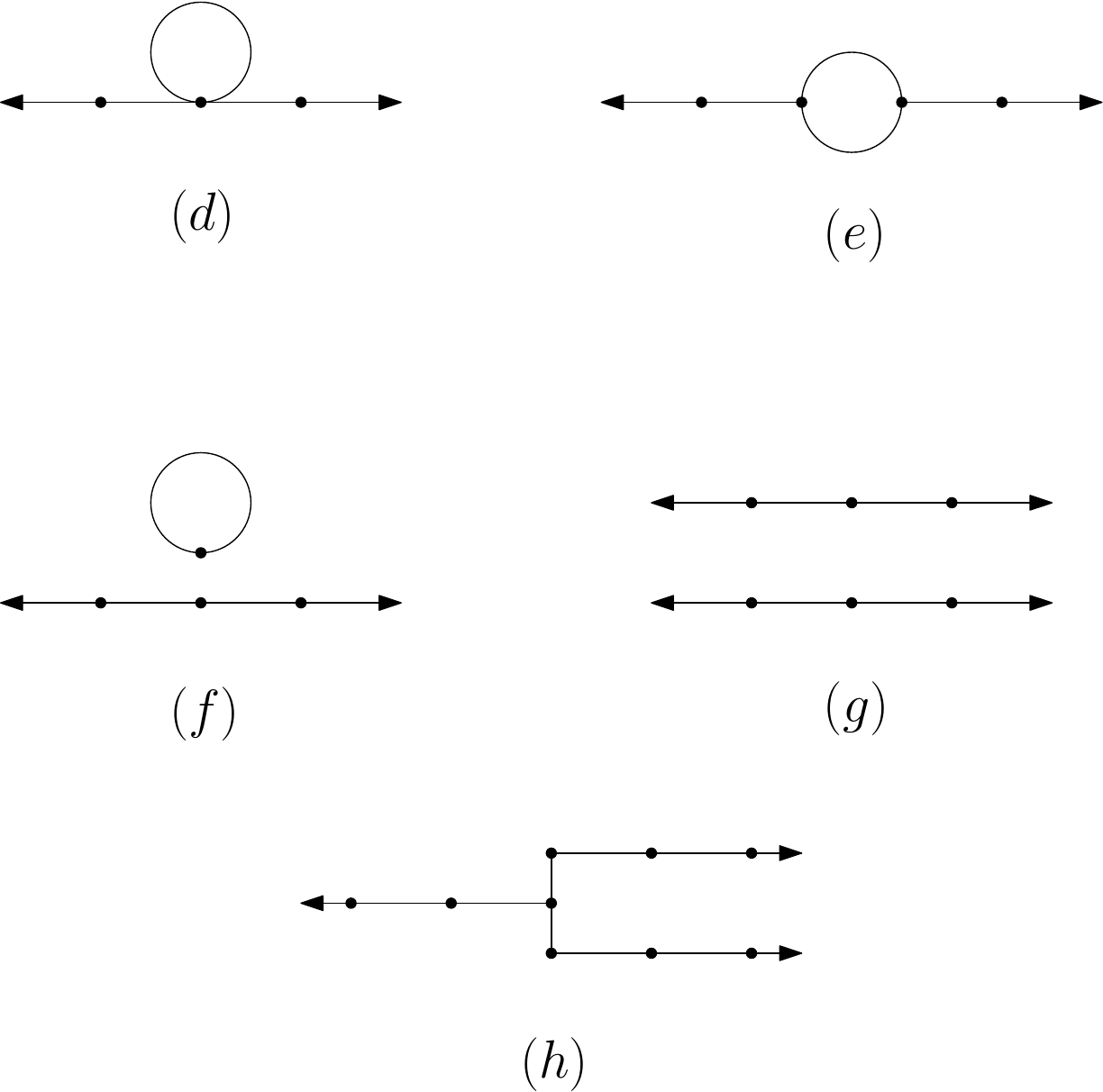}
   	  \caption{Shapes of circuits in $M^+$, with $M$ an algebraic cycle matroid}
   	  \label{fig:alg_cyc}
\end{center}
   \end{figure}

Now that we have a good understanding of the circuits of matroids constructed this way, we can find many matroids $M$ such that $M^+$ is wild. 

\begin{thm}
Let $M$ be a matroid such that
\begin{enumerate}
 \item $M$ contains at least two circuits;
\item $M$ has a base $B$ and a circuit $O$ such that $O\setminus B$ is infinite.
\end{enumerate}
Then $M^+$ is wild.
\end{thm}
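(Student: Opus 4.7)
The plan is to exhibit a circuit $C$ and a cocircuit $D$ of $M^+$ whose intersection contains $O \setminus B$, which by hypothesis (2) is infinite. I will build $D$ first, then $C$, and the only nontrivial step will be finding the ingredients for $C$.

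For the cocircuit, I take $D := E \setminus B$. Since $B$ is a base of $M$, $D$ is a base of $M^*$. The discussion following \autoref{M+} establishes that every base of a matroid $N$ is a circuit of $N^-$; applying this to $N = M^*$ and using $(M^+)^* = (M^*)^-$, we conclude that $D$ is a cocircuit of $M^+$.

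For the circuit, I appeal to \autoref{circ+}: it suffices to exhibit a circuit $O_2$ of $M/O$, for then $C := O \cup O_2$ is a circuit of $M^+$, and $C \cap D \supseteq O \setminus B$ is already infinite. The main obstacle is verifying that $M/O$ has any circuit at all, and this is the only point at which hypothesis (1) is used. Let $O'$ be any $M$-circuit distinct from $O$; by hypothesis (1) one exists. Since distinct circuits of a matroid are incomparable under inclusion, $O \setminus O'$ is nonempty, so pick $e$ in this difference. Then $B_O := O \setminus \{e\}$ is a base of the circuit $O$ in $M$, and the set $(O' \setminus O) \cup B_O$ contains all of $O'$, hence is $M$-dependent. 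By the definition of contraction, $O' \setminus O$ is then $M/O$-dependent, and any minimal dependent subset of it is the required circuit $O_2$. Combining with \autoref{circ+} yields the desired circuit $C$, and the proof is complete.
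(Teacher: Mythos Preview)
Your proof is correct and follows essentially the same route as the paper: take the cocircuit $D = E \setminus B$ (an $M^*$-base, hence an $(M^*)^-$-circuit), show that the second circuit $O'$ yields an $M/O$-dependent set and hence an $M/O$-circuit $O_2$, and then apply \autoref{circ+} to obtain the $M^+$-circuit $O \cup O_2$ whose intersection with $D$ contains $O \setminus B$. If anything, your version is slightly more careful than the paper's, which writes ``$O'$ is dependent in $M/O$'' without explicitly passing to $O' \setminus O$ or justifying that step.
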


\begin{proof}
Let $O'$ be any circuit other than $O$.
As $O'$ is dependent in $M/O$, there is an $M/O$-circuit $O''$ contained in
$O'$. By \autoref{circ+}, $O\cup O''$ is an $M^+$-circuit.

Since $E\backslash B$ is an $M^*$-base, it is a circuit of $(M^*)^-=(M^+)^*$.
Now $(O\cup O'')\cap (E\backslash B)$ includes $O\setminus B$ and so it is
infinite.

\end{proof}

\section{Second construction: matroid union}

The \emph{union of two matroids $M_1 = (E_1,\Ical_1)$ and $M_2=(E_2,\Ical_2)$} is the pair $(E_1\cup E_2, \Ical_1\vee \Ical_2)$, where
\[
 \Ical_1\vee \Ical_2:=  \{I_1\cup I_2\mid I_1 \in \Ical_1,\; I_2 \in \Ical_2\}
\]

The \emph{finitarization $M^{fin}$ of a matroid $M$} is the matroid whose circuits are precisely the finite circuits of $M$.
In \cite{union2} it is shown that $M^{fin}$ is always a matroid.
Note that every base of $M^{fin}$ contains some base of $M$ and conversely every base of $M$ is contained in some base of $M^{fin}$.
A matroid $M$ is called \emph{nearly finitary} if for every base of $M$, it suffices to add finitely many elements to that base to obtain some base of $M^{fin}$.
It is easy to show that $M$ is nearly finitary if and only if  
for every base of $M^{fin}$ it suffices to delete finitely many elements from that base to obtain some base of $M$.

The main tool for this example is the following theorem.

\begin{thm}[\cite{union2}]\label{nf}
The union of two nearly finitary matroids is a matroid, and in fact nearly finitary.
\end{thm}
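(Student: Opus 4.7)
The plan is to prove both assertions simultaneously by using the finitarisations as an outer approximation. Set $N_i := M_i^{fin}$ and $N := N_1 \vee N_2$. Since each $N_i$ is finitary, $N$ is a finitary matroid: the finitary case of matroid union is the classical, easier result, as one only has to verify axioms on finite subsets.

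The central technical step is the following approximation claim: for every $J \in \Ical(N)$ there exists $J' \subseteq J$ with $J' \in \Ical_1 \vee \Ical_2$ and $J \setminus J'$ finite. To prove this, decompose $J = J_1 \cup J_2$ with $J_i \in \Ical(N_i) = \Ical(M_i^{fin})$, extend each $J_i$ to a base $\tilde B_i$ of $N_i$, and use near-finitariness of $M_i$ to find a finite $F_i \subseteq \tilde B_i$ such that $\tilde B_i \setminus F_i \in \Bcal(M_i)$. Setting $J' := (J_1 \setminus F_1) \cup (J_2 \setminus F_2)$ gives a member of $\Ical_1 \vee \Ical_2$, and $J \setminus J' \subseteq F_1 \cup F_2$ is finite.

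With this bridge between $\Ical_1 \vee \Ical_2$ and $\Ical(N)$ in place, I would verify the matroid axioms for $\Ical_1 \vee \Ical_2$ by transferring them from $N$. Axioms (I1) and (I2) are immediate. For (IM), given $I \in \Ical_1 \vee \Ical_2$ with $I \subseteq X$, apply (IM) in $N$ to obtain a maximal $K \in \Ical(N)$ with $I \subseteq K \subseteq X$; apply the approximation claim to produce $K' \subseteq K$ in $\Ical_1 \vee \Ical_2$ with $K \setminus K'$ finite; then repair $K'$ to contain $I$ and be maximal in $[I,X]\cap(\Ical_1\vee\Ical_2)$ by a sequence of finite exchanges within each $\Ical(M_i)$ (which is legitimate, since each $M_i$ is already a matroid). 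For (I3), maximal elements of $\Ical_1 \vee \Ical_2$ lie within a finite distance of bases of $N$, so (I3) for $N$ yields a candidate augmentation which a local correction transfers into $\Ical_1 \vee \Ical_2$. Finally, for the nearly-finitary conclusion, the approximation claim implies $(M_1 \vee M_2)^{fin} = N$, and one shows that every base of $M_1 \vee M_2$ decomposes as $\bar B_1 \cup \bar B_2$ with each $\bar B_i \in \Bcal(M_i)$ and then invokes near-finitariness of each $M_i$ and a finite-many-element extension inside $N$.

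The principal obstacle is axiom (IM). Chains in $\Ical_1 \vee \Ical_2$ need not have their union back in $\Ical_1 \vee \Ical_2$; the union lies only in the larger $\Ical(N)$. Hence a straight Zorn's lemma argument fails, and one must produce maximal candidates inside $\Ical(N)$ and then correct them using the finite ``defect'' supplied by near-finitariness, all without destroying the fixed lower bound $I$. Organising these corrections so that the output is genuinely maximal in $\Ical_1 \vee \Ical_2$, rather than just finitely short of maximal, is the delicate point of the proof.
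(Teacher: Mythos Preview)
The paper does not prove this theorem: it is quoted from \cite{union2} and used as a black box (see the sentence ``The main tool for this example is the following theorem''). There is therefore no proof in the present paper to compare your proposal against.

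That said, your outline is the natural strategy and is in the spirit of how the result is actually established in \cite{union2}: one sandwiches $\Ical_1\vee\Ical_2$ inside the finitary matroid $N=M_1^{fin}\vee M_2^{fin}$ with finite defect, and transfers the axioms from $N$. Your approximation claim is correct and your observation that $(M_1\vee M_2)^{fin}=N$ follows from it. However, the parts you yourself flag as delicate---(IM) and (I3)---are genuinely where the work lies, and your sketch does not yet close them. For (IM), after producing a maximal $K\in\Ical(N)$ with $I\subseteq K\subseteq X$ and an approximant $K'\in\Ical_1\vee\Ical_2$ with $K\setminus K'$ finite, you must (a) arrange $I\subseteq K'$, which requires choosing the decomposition $K=K_1\cup K_2$ compatibly with the given decomposition of $I$, and (b) argue that only finitely many further additions inside $X$ are possible before leaving $\Ical_1\vee\Ical_2$; the latter needs a rank-type comparison with $N$, not just ``finite exchanges within each $\Ical(M_i)$''. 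Similarly, for (I3) one has to identify the maximal elements of $\Ical_1\vee\Ical_2$ precisely (they are the $N$-bases that lie in $\Ical_1\vee\Ical_2$, not merely sets ``within finite distance'' of $N$-bases) before the augmentation argument goes through. These gaps are fillable, but as written the proposal is an outline rather than a proof.
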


Note that there are two matroids whose union is not a matroid \cite{union1}.

One can also define $M^+$ using matroid union: $M^+=M\vee U_{1,E(M)}$.
Here $U_{1,E(M)}$ is the matroid with groundset $E(M)$, whose bases are the $1$-element subsets of $E(M)$.
In this section, we will obtain a wild matroid as union of some non-wild matroid $M$ with itself.

Let us start constructing $M$.
We obtain the graph $H$ from the infinite one-sided ladder $L$ by doubling every edge, see \autoref{fig:cit_co-cir}.

   \begin{figure} [htpb]   
\begin{center}
   	  \includegraphics[width=10cm]{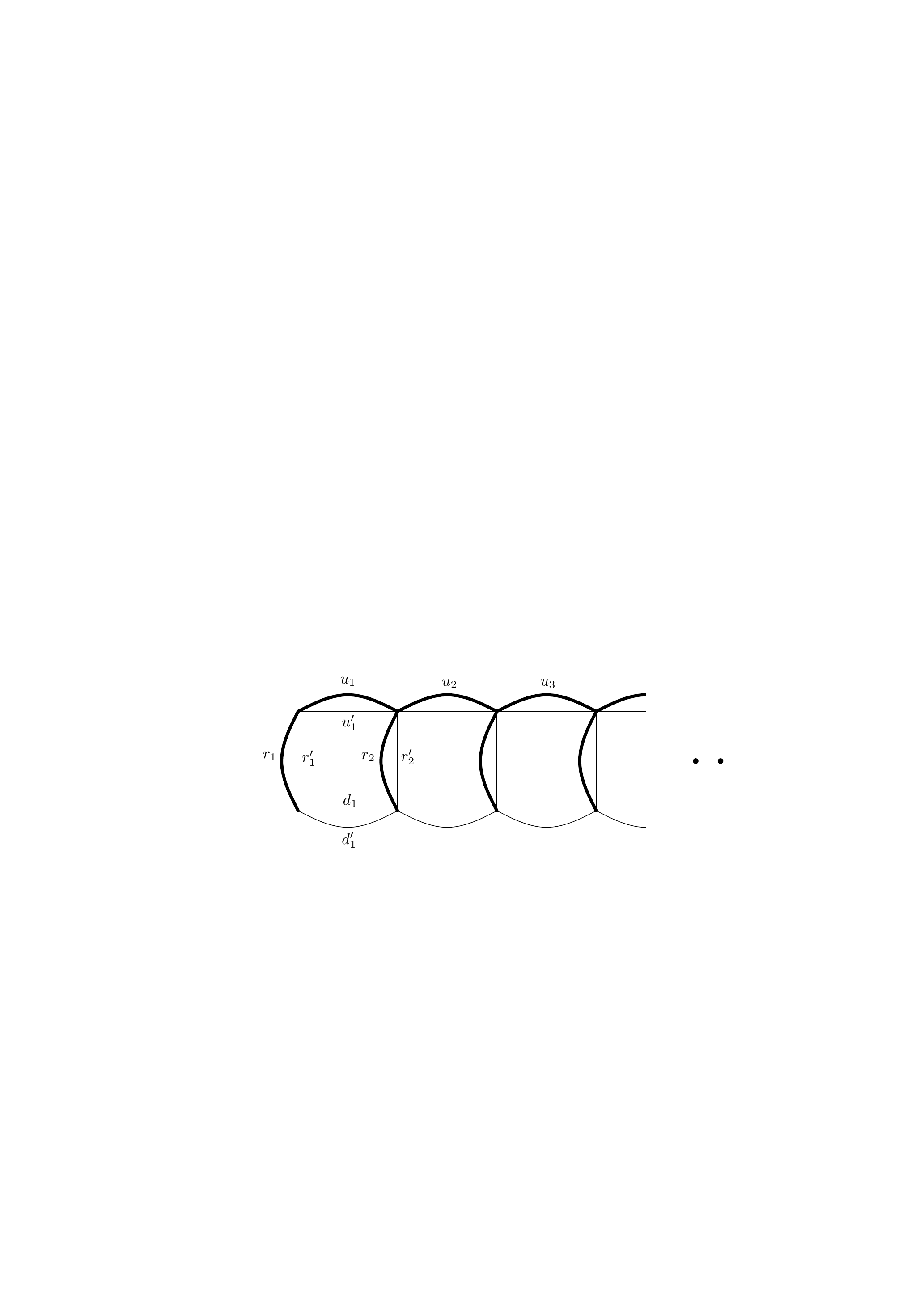}
   	  \caption{The graph $H$}
   	  \label{fig:cit_co-cir}
\end{center}
   \end{figure}

As in the figure, we fix the following notation for the edges of $H$:
In $L$, call the edges on the upper side of the ladder $u_1,u_2,\ldots$, the edges on the lower side $d_1,d_2,\ldots$ and the rungs $r_1,r_2,\ldots$.
For every edge $e$ of $L$, call its clone $e'$.

Let $M_A(H)$ be the algebraic cycle matroid of $H$. Note that $M_A(H)$ is a matroid by the results mentioned in the Preliminaries.
Now we define $M$ as the union of $M_A(H)$ with itself.
To show that $M$ is a matroid, by \autoref{nf} it suffices to show the following.

\begin{lem}\label{H_is_nf}
$M_A(H)$ is nearly finitary. 
\end{lem}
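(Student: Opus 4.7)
My plan is to establish that every base $B$ of $M_A(H)$ can be extended to a spanning tree of $H$---equivalently, to a base of $M_A(H)^{fin}$---by adjoining only finitely many edges. This will follow from a bound on the number of components of $B$.

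Recall that the circuits of $M_A(H)$ are the finite cycles and the double rays of $H$, so an independent set of $M_A(H)$ is precisely a forest in which every tree component has at most one end. I would first argue that each component of $B$ is infinite, hence (having at most one end) exactly one-ended. For if $T_0$ were a finite component of $B$, then since $H$ is connected there would exist an edge $e \in E(H)\setminus B$ joining $T_0$ to some other component $T_1$; adding $e$ cannot create a finite cycle (as $e$ joins distinct components), and attaching the finite piece $T_0$ to $T_1$ via a single edge leaves the end-structure of $T_1$ unchanged, in particular producing no double ray. Hence $B+e$ would remain independent in $M_A(H)$, contradicting the maximality of $B$; in particular $B$ spans $V(H)$.

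The crux is a cut-counting argument. For each $k \geq 1$ I would consider the finite edge cut
\[
C_k = \{u_k, u_k', d_k, d_k'\}
\]
of $H$, which separates the finite vertex set $S_k := \{v_1,\ldots,v_k,w_1,\ldots,w_k\}$ from its complement. If a component $T$ of $B$ meets $S_k$, then being infinite in a one-ended graph it also meets $V(H)\setminus S_k$, and being connected it must contain an edge of $C_k$. Components of $B$ have pairwise disjoint edge sets, so at most $|C_k|=4$ components of $B$ meet $S_k$. Letting $k\to\infty$ every vertex of $H$ eventually lies in some $S_k$, so $B$ has at most four components in total.

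Finally, any forest in $H$ with at most four components extends to a spanning tree of $H$ by adjoining at most three edges, one per required merger; any such extension is a base of $M_A(H)^{fin}$ containing $B$ and differing from it by at most three elements. This proves that $M_A(H)$ is nearly finitary. I expect the most delicate point to be the maximality argument ruling out finite components; once that is in place, the cut-counting bound and the extension step are immediate.
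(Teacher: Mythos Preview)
Your proof is correct, but it takes a genuinely different route from the paper's. The paper argues \emph{top-down}: it starts from a base $B$ of $M_F(H)=M_A(H)^{fin}$, i.e.\ a spanning tree, and shows that $B$ contains at most one double ray. The key observation is structural: every double ray in $H$ uses exactly one rung, and any two distinct double rays together must enclose a finite cycle, so a spanning tree cannot contain two of them. Hence removing at most one edge from any spanning tree yields a base of $M_A(H)$.

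You instead argue \emph{bottom-up}: starting from a base $B$ of $M_A(H)$, you first exclude finite components by maximality, then bound the number of (necessarily infinite, one-ended) components by $4$ via the size-$4$ cuts $\{u_k,u_k',d_k,d_k'\}$, and finally add at most three edges to reach a spanning tree. Your argument is slightly coarser---it gives a bound of three added edges rather than one deleted edge---but it is more portable: it uses only connectivity and the existence of small separating cuts, not any analysis of the shape of double rays, and so would transfer immediately to other locally finite one-ended graphs with bounded edge-cuts. The paper's argument, on the other hand, yields the sharper constant and is closer in spirit to the explicit manipulations of double rays used elsewhere in the section. One minor point: your phrase ``being infinite in a one-ended graph'' is a little loose; all you actually use is that each component is infinite while $S_k$ is finite, so it must meet $V(H)\setminus S_k$.
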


\begin{proof}
First note that the finitarization of $M_A(H)$ is the finite cycle matroid $M_F(H)$, whose circuits are the finite cycles of $H$.
To see that $M_A(H)$ is nearly finitary, it suffices to show that each base $B$ of $M_F(H)$ contains at most one double ray.
It is easy to see that a double ray $R$ of $H$ contains precisely one rung $r_i$ or $r_i'$. 
From this rung onwards, $R$ contains precisely one of $u_j$ or $u_j'$ for $j\geq i$ and one of  $d_j$ or $d_j'$ for $j\geq i$.
Let $R$ and $S$ be two distinct double rays with unique rung edges $e_R$ and $e_S$.
Wlog assume that the index of $e_R$ is less or equal than the index of $e_S$.
Then already $R+e_S$ contains a finite circuit, which consists of $e_R$, $e_S$ and all edges of $R$ with smaller index than that of $e_S$.
So each base $B$ of $M_F(H)$ contains at most one double ray, proving the assumption.
\end{proof}

Having proved that $M\vee M$ is a matroid, we next prove that it is wild.

\begin{thm}\label{union_wild}
The matroid $M\vee M$ is wild.
\end{thm}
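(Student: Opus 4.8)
The plan is to prove $M\vee M$ wild by exhibiting, inside the graph $H$, an explicit circuit $C$ and an explicit cocircuit $D$ of $M\vee M$ with $|C\cap D|=\infty$. The circuit I would take is a ``doubled double ray''. Let $\rho$ be the double ray of $H$ whose unique rung is $r_1$, which ascends along $u_1,u_2,\dots$ and descends along $d_1,d_2,\dots$, let $\rho'$ be its clone, and set $C=(\rho\cup\rho')\setminus\{u_1\}$ (deleting one edge is needed for minimality, since $\rho\cup\rho'$ itself stays dependent when any one top edge is removed). To verify $C$ is a circuit: (i) $C$ is dependent, because in any way of writing $C$ as a disjoint union of two $M_A(H)$-independent sets each parallel pair $\{e,e'\}\subseteq C$ must be split between the two parts (a parallel pair is a finite cycle, hence an $M_A(H)$-circuit), so whichever part receives $r_1$ then contains a rung together with one top edge and one bottom edge at every level, i.e.\ a full double ray of $H$ — contradicting $M_A(H)$-independence; (ii) $C$ is minimal, because after deleting any further single edge one can always split the remaining edges so that each part misses a top level or a bottom level on the side of whichever rung it holds, and is therefore a forest without double rays. (If the operative matroid is a higher power of $M_A(H)$ the same idea applies, using more parts in the decomposition argument.)

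For the cocircuit I would look for an \emph{infinite} minimal set $D\subseteq C$ with $E\setminus D$ non-spanning; concretely one expects $D$ to be a ``tail'' of the doubled ladder, of roughly the shape $\{u_i,u_i' : i\geq N\}\cup\{d_i : i\geq N\}$ suitably refined so that every one of its edges is essential. The structural input is that a base of $M_A(H)$ is exactly a maximal forest of $H$ with no double ray, and that by the near-finitariness established in \autoref{H_is_nf} such a base is a spanning tree of $H$, or a spanning tree minus one edge of its (unique) double ray; consequently $E\setminus D$ spans $M\vee M$ iff $H\setminus D$ contains two (resp.\ more) pairwise edge-disjoint $M_A(H)$-bases. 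One then counts the obstruction by a Nash--Williams/Tutte-type partition argument: contracting each rung-pair to a single vertex produces a partition of $V(H)$ across which $H\setminus D$ carries one fewer surviving edge than two edge-disjoint spanning structures would require, while adding back any single edge of $D$ repairs exactly this unit of deficiency. Since then $D\subseteq C$ is infinite, $|C\cap D|=\infty$, which proves the theorem.

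The main obstacle is the cocircuit, i.e.\ the choice and verification of $D$. There are two genuine subtleties. First, a base of $M_A(H)$ may be \emph{disconnected} — for example a ray running along the top side together with a ray along the bottom side — so ``$E\setminus D$ fails to span $M\vee M$'' is strictly weaker than ``$H\setminus D$ has too few edge-disjoint spanning trees'', and the packing argument must be run for $M_A(H)$-bases, using that every such base covers every vertex of $H$ (else it could be enlarged along an edge of $H$). Second, and this is where most of the work lies, one must ensure $D$ is genuinely an infinite cocircuit rather than a dependent set of the dual that merely contains a smaller, finite, localized ``bottleneck'' cocircuit (the degree argument at a corner of the ladder already produces finite cocircuits, which are useless for wildness); getting the truncation of the tail exactly right so that minimality holds — deleting any edge of $D$ restores spanning — is the delicate point.
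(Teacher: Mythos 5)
Your circuit $C=(\rho\cup\rho')\setminus\{u_1\}$ is indeed a circuit of $M\vee M$, though it is a different one from the paper's, which takes all horizontals plus one rung, namely $\{u_i,u_i',d_i,d_i' : i\ge 1\}+r_1$. There is a small slip in your dependence argument (i): because $u_1\notin C$, the part that receives $r_1$ need \emph{not} see a top edge at level~1 and so need not close up to a double ray. What actually works is to track the lone unpaired edge $u_1'$: whichever part receives $u_1'$ also receives a rung at level~1 (it gets one of $r_1,r_1'$) and one edge from each of the remaining parallel pairs, and that part is the one forced to contain a double ray. The conclusion survives, and your minimality sketch (ii) is essentially right.

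The genuine gap is in the cocircuit, as you yourself flag. You constrain $D$ to lie inside $C$ and guess a ``tail'' $D\approx\{u_i,u_i':i\ge N\}\cup\{d_i:i\ge N\}$, deferring verification to a Nash--Williams/Tutte count. Neither of these is a good move. Requiring $D\subseteq C$ is unnecessary (only $|C\cap D|=\infty$ matters), and it pushes you towards tails, which are the wrong shape: for any such $D$, the complement $E\setminus D$ retains \emph{all} rungs and all of $\{d_i'\}$, so any failure to span is a global structural deficiency on the upper side of the ladder that persists after returning any single edge of $D$ --- adding one $u_i$ or $d_i$ back in does not change which vertices $a_j$ with $j\ge N$ can be joined horizontally. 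So either $E\setminus D$ is already spanning or it remains non-spanning after adding a single element, and in the latter case $D$ is codependent but not a cocircuit; the Tutte-style ``contract the rung pairs'' count cannot localise the obstruction to a single unit. The paper's cocircuit is not a tail at all: it is $D=\{u_i,r_i:i\ge 1\}$, which threads through every level, and crucially $D\not\subseteq C$. Proving it is a cocircuit is the hard core of the theorem. One first exhibits explicit bases $B_1,B_2$ of $M_A(H)$ with $B_1\cup B_2=(E\setminus D)+r_1$, so $E\setminus D$ sits properly inside an $M\vee M$-independent set and is therefore non-spanning. One then does local surgery on $B_1,B_2$ to show $E\setminus D+e$ is coverable by two bases of $M_A(H)$ for every $e\in D$. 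Finally, one rules out adding two elements $e_1,e_2\in D$ by an edge count inside the finite truncations $H_n$: $E\setminus D$ has $4n-3$ edges in $H_n$, and two spanning forests of $H_n$ can carry at most $2(2n-1)=4n-2$, which is not enough to also accommodate $e_1,e_2$. Without an explicit $D$ and a verification of this sort, the proposal does not establish the theorem.
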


To prove this, we will construct a circuit $C$ and a cocircuit $D$ with infinite intersection.
Let us start with $C$, which we define as the set of all horizontal edges in \autoref{fig:cit_co-cir} together with the rung $r_1$.

\begin{lem}\label{C}
$C:=\{u_i, u_i', d_i,d_i'|i=1,2,\ldots\}+r_1 $ is a circuit of $M$.
\end{lem}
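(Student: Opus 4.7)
The plan is to verify directly that (a)~$C$ is $M$-dependent and (b)~$C-e$ is $M$-independent for every $e\in C$. A key observation used throughout is that each parallel pair $\{u_i,u_i'\}$ and each pair $\{d_i,d_i'\}$ is a $2$-cycle of $H$, hence an $M_A(H)$-circuit.

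For (a), suppose toward contradiction that $C=I_1\cup I_2$ with each $I_j\in\Ical(M_A(H))$. The parallel-pair observation forces, for every $i$, that exactly one of $u_i,u_i'$ belongs to $I_1$ (and the other to $I_2$), and similarly for $\{d_i,d_i'\}$; also $r_1$ lies in some $I_j$, which we may take to be $I_1$. Let $v_1v_2\cdots$ and $w_1w_2\cdots$ denote the upper and lower rails of $H$, with $r_1=v_1w_1$. Then the edges of $I_1$ drawn from the upper pairs form a ray along $v_1v_2\cdots$, those from the lower pairs form a ray along $w_1w_2\cdots$, and $r_1$ joins these two rays into a double ray. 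Since double rays are $M_A(H)$-circuits, $I_1$ is $M_A(H)$-dependent, a contradiction.

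For (b), using the automorphisms of $H$ that exchange each edge with its clone and that swap the upper and lower rails, it suffices to handle $e=r_1$ and $e=u_j$. When $e=r_1$, partition $C-r_1$ as $\{u_i,d_i\mid i\geq 1\}\cup\{u_i',d_i'\mid i\geq 1\}$; each half is a disjoint union of two rays, so $M_A(H)$-independent. When $e=u_j$, take
\[
I_1:=\{r_1\}\cup\{u_i\mid i\neq j\}\cup\{d_i'\mid i\geq 1\},\qquad I_2:=\{u_i'\mid i\geq 1\}\cup\{d_i\mid i\geq 1\}.
\]
Now $I_1$ is the disjoint union of the ray $v_jv_{j-1}\cdots v_1w_1w_2\cdots$ with the ray $v_{j+1}v_{j+2}\cdots$, while $I_2$ is again a disjoint union of two rays; both are $M_A(H)$-independent. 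The main subtlety is precisely this case $e=u_j$: by (a), placing $r_1$ together with two complete rail-rays inevitably produces a double ray, so one must exploit the removal of $u_j$ to break the upper rail, which is exactly what the above partition does by assigning $r_1$ to the part carrying the broken upper rail together with the lower-rail clones.
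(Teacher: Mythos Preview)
Your proof is correct and follows essentially the same approach as the paper's. For part~(a) you argue directly that any decomposition $C=I_1\cup I_2$ forces a double ray in the part containing $r_1$, whereas the paper phrases this as showing $C-r_1$ is a base; your formulation is in fact a bit more direct. For part~(b) your decompositions coincide with the paper's up to swapping which of the lower-rail edges $d_i$ or $d_i'$ go into each part.
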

\begin{proof}
First, we show that $C$ is dependent. To this end, it suffices to show that $C-r_1= \{u_i, u_i', d_i,d_i'|i=1,2,\ldots\}$ is a basis of $M$.
As $I_1=\{u_i, d_i|i=1,2,\ldots\}$ and $I_2=\{u_i',d_i'|i=1,2,\ldots\}$ are both independent in $M_A(H)$, their union $C-r_1$ is independent in $M$.
All other representations $C-r_1=I_1\cup I_2$ with $I_1,I_2\in \Ical(M_A(H))$ are the upper one up to exchanging parallel edges since 
from $u_i$ and $u_i'$ precisely one is $I_1$ and the other is in $I_2$. Similarly, the same is true for $d_i$ and $d_i'$.
So $C-r_1$ is a base and $C$ is dependent, as desired.

It remains to show that $C-e$ is independent for every $e\in C$.
The case $e=r_1$, was already consider above.
By symmetry, we may else assume that $e=u_i$. Then $C-u_i=I_1\cup I_2$ where
$I_1=\{u_i, d_i|i=1,2,\ldots\}-u_i+r_1$ and $I_2=\{u_i',d_i'|i=1,2,\ldots\}$ and $I_1$ and $I_2$ are both independent in $M_A(H)$, proving the assumption. 
\end{proof}

Next we turn to $D$, drawn bold in \autoref{fig:cit_co-cir}.

\begin{lem}\label{D}
$D:=\{u_i, r_i|i=1,2,\ldots\}$ is a cocircuit of $M$.
\end{lem}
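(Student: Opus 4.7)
The plan is to show that $F := E(H)\setminus D = \{u_i', d_i, d_i', r_i' : i\ge 1\}$ is a hyperplane of $M$; its complement $D$ is then a cocircuit.

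First, I would verify that $F$ itself is $M$-independent, via the decomposition $F = I_1 \cup I_2$ with $I_1 = \{u_i', d_i' : i\ge 1\}$ (the primed top and bottom rays) and $I_2 = \{r_i', d_i : i\ge 1\}$ (the primed rungs together with the unprimed bottom ray). Each is a forest in $H$ extending only rightwards, so contains no finite cycle or double ray, and is therefore $M_A(H)$-independent.

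Next, for every $e\in D$ I would exhibit an explicit partition of $F+e$ into two $M_A(H)$-independent sets, showing $F+e$ is $M$-independent. For $e=r_j$: take $I_1 = \{u_i', d_i'\}\cup\{r_j\}$ (the two primed rays joined by a single rung) and $I_2 = \{r_i', d_i\}$. For $e=u_j$: take $I_1 = \{u_i', d_i', r_j'\}$ and $I_2 = \{r_i' : i\ne j\}\cup\{d_i\}\cup\{u_j\}$. In each case the halves are easily seen to be rightward-growing forests without double rays.

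The hard part is to prove maximality: for any two distinct $e,e'\in D$, $F+e+e'$ is $M$-dependent. I would use a finite-window rank count. Choose $N$ large enough that both indices of $e$ and $e'$ are at most $N+1$, and let $Y' \subseteq E(H)$ be the set of edges of $F+e+e'$ that lie in the first $N+1$ columns of $H$ (i.e.\ among $u_i, u_i', d_i, d_i'$ with $i\le N$ and $r_i, r_i'$ with $i\le N+1$). A direct count gives $|Y'| = 4N+3$. The subgraph $(V,Y')$ is connected on the $2(N+1)$ vertices of the first $N+1$ columns (the primed rungs $r_i'$ link the two sides), so it has rank $2N+1$ in the finite cycle matroid $M_F(H)$, which coincides with $M_A(H)$ on the finite set $Y'$. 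If $Y'$ were $M$-independent, then one could write $Y' = I_1\cup I_2$ with each $I_k\in\Ical(M_A(H))$, so $I_k\subseteq Y'$ would be independent in $M_A(H)|_{Y'} = M_F(H)|_{Y'}$, forcing $|Y'| \le |I_1|+|I_2| \le 2(2N+1) = 4N+2$, which contradicts $|Y'| = 4N+3$. Hence $Y'$, and a fortiori $F+e+e'$, is $M$-dependent.

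Putting the steps together, $F+e$ is a maximal $M$-independent set for every $e\in D$, that is, a base of $M$. In particular $F$ is strictly contained in a base, hence non-spanning, while each $F+e$ is itself a base and so spans $M$. This exhibits $F$ as a hyperplane of $M$, so $D$ is a cocircuit, as required. The main obstacle is the finite rank count of Step $3$; the other steps merely require writing down the explicit partitions.
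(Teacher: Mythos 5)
Your overall strategy---show $F := E\setminus D$ is a hyperplane by proving $F+e$ is a base of $M$ for each $e\in D$, where independence comes from an explicit two-part partition and maximality comes from a finite rank count---is exactly the paper's, and your Step~3 rank count is essentially the argument the paper gives (they restrict to $H_n$, count $4n-3$ edges of $F$ in it, and note two spanning trees of a $2n$-vertex graph carry at most $4n-2$ edges).

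However, Step~2 has a genuine gap: both of the partitions you propose are wrong, because in each case $I_1$ is dependent in $M_A(H)$. For $e=r_j$ you take $I_1=\{u_i',d_i':i\ge1\}\cup\{r_j\}$. This is the top primed ray and the bottom primed ray joined by the single rung $r_j$. Going to infinity along the top, then across $r_j$, then to infinity along the bottom is a double ray, so its edge set is a circuit of $M_A(H)$ and $I_1$ is dependent. Exactly the same happens for $e=u_j$ with $I_1=\{u_i',d_i',r_j'\}$. Moreover the obvious repair---moving $r_j$ to $I_2=\{r_i',d_i\}$---also fails, since $\{r_j,r_j'\}$ is a parallel-edge cycle. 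The issue is structural: once both full primed rays are on the same side, any rung joining them creates a double ray, so the decomposition of $F$ itself has to be chosen differently. This is precisely what the paper does: their $B_1,B_2$ split the $u'_i$ and $r'_i$ by parity, and the modified covers $B_1',B_2'$ (for $e=r_n$) and $B_1'',B_2''$ (for $e=u_n$) swap parities to the left of column $n$. So the ``easy'' step of your plan is in fact where the care is needed, and as written it does not go through; the partitions must be replaced by parity-alternating ones as in the paper.
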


\begin{proof}
 To this end, we show that $E\setminus D$ is a hyperplane, that is, $E\setminus D$ 
is non-spanning and $E\setminus D$ together with any edge is spanning in $M$.
To see that $E\setminus D$ is non-spanning, we properly cover it by the following two bases $B_1$ and $B_2$ of $M_A(H)$, see \autoref{fig:cover1}.
Formally, $B_1:=\{d_i|i=1,2,\ldots \} \cup \{r_i'|i \text{ odd} \}\cup \{u_i'|i \text{ odd} \}$, $B_2:=\{d_i'|i=1,2,\ldots\} \cup \{r_i'|i \text{ even}\}\cup \{u_i'|i \text{ even}\}+r_1$.

   \begin{figure} [htpb]   
\begin{center}
   	  \includegraphics[width=10cm]{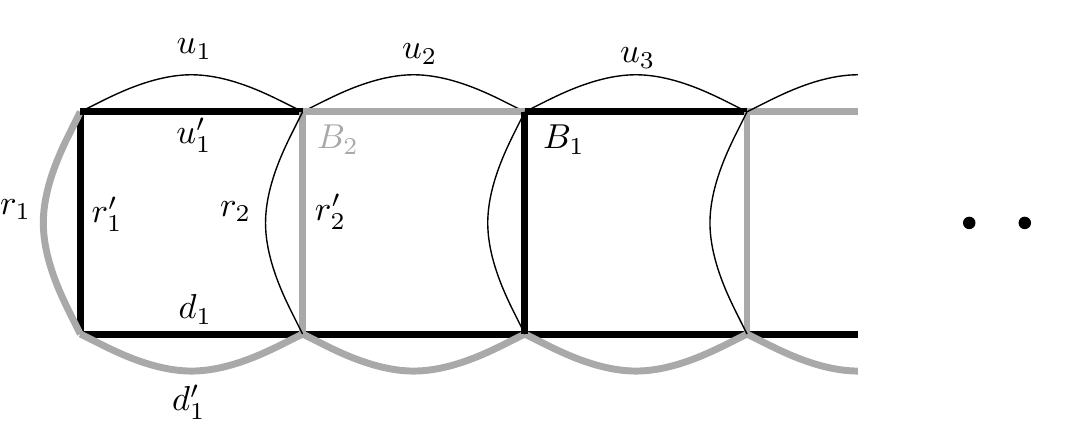}
   	  \caption{The two bases $B_1$ and $B_2$ properly cover $E\setminus D$}
   	  \label{fig:cover1}
\end{center}
   \end{figure}

To see that $E\setminus D$ together with any edge is spanning in $M$, we even show that  $E\setminus D$ together with any edge is a base of $M$.
This is done in two steps: first we show that $E\setminus D$ together with any edge $e$ is independent in $M$ and then that 
$E\setminus D$ together with any two edges is dependent in $M$.
Concerning the first assertion, we distinguish between the cases $e=u_n$ for some $n$ and $e=r_n$ for some $n$.
In both cases we assume that $n$ is odd. If $n$ is even, then the argument is similar.
In both cases we will cover $E\setminus D+e$ with two bases of $M_A(H)$, which arise from a slight modification of $B_1$ and $B_2$, see Figures \ref{fig:cover2} and \ref{fig:cover3}.
{
   \begin{figure} [htpb]   
\begin{center}
   	  \includegraphics[width=10cm]{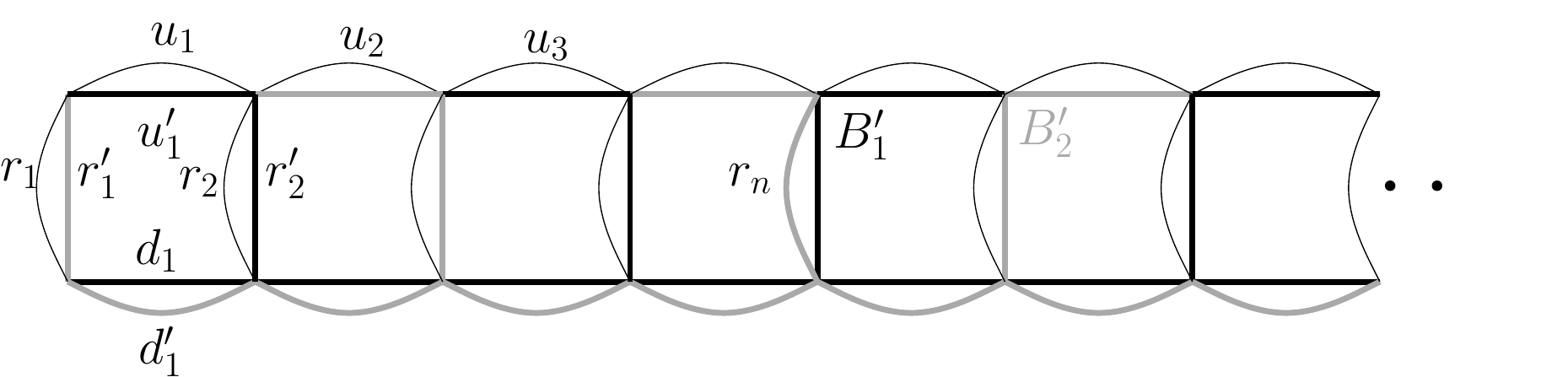}
   	  \caption{The two bases $B_1'$ and $B_2'$ cover $E\setminus D+r_n$}
   	  \label{fig:cover2}
\end{center}
   \end{figure}
   \begin{figure} [htpb]   
\begin{center}
   	  \includegraphics[width=10cm]{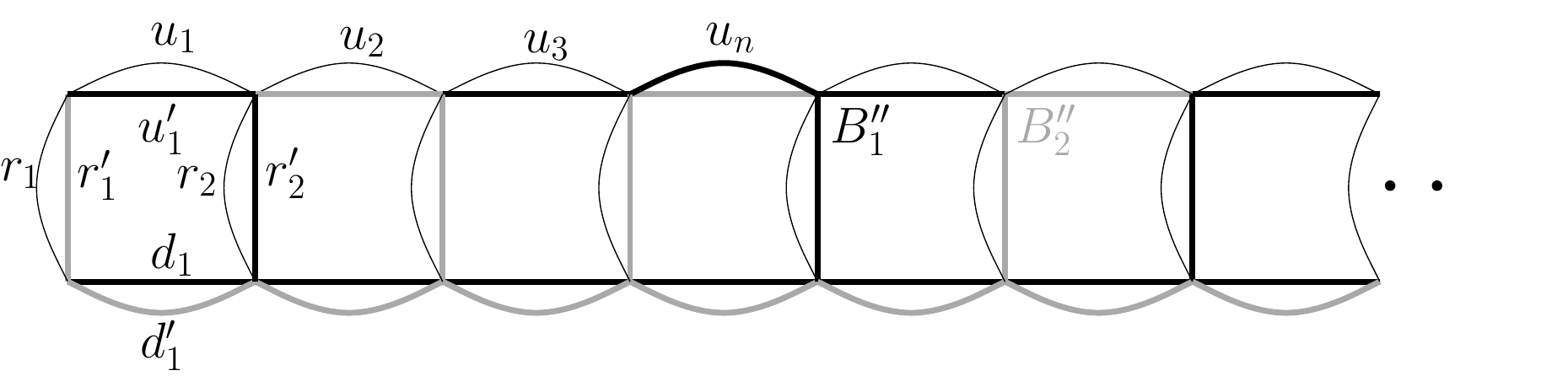}
   	  \caption{The two bases $B_1''$ and $B_2''$ cover $E\setminus D+u_n$}
   	  \label{fig:cover3}
\end{center}
   \end{figure}
}

In the first case the bases are
{
\[
 B_1':=B_1\setminus \{r_i'| i< n \text{ and odd}\} \cup \{r_i'| i< n \text{ and even}\}, 
\]
\[
B_2':=B_2 \cup \{r_i'| i< n \text{ and odd}\} \setminus \{r_i'| i< n \text{ and even}\}-r_1+r_n 
\]
}
In the second case, the bases arise from $B_1'$ and $B_2'$ as follows:
\[
 B_1'':= B_1'+u_n-r_{n-1}', \  \ B_1'':= B_1'+r_{n-1}'-r_n
\]

Having shown that $E\setminus D+e$ is independent for every $e\in D$, it remains to show for any two $e_1,e_2\in D$ that 
$E\setminus D+e_1+e_2$ cannot be covered by two bases of $M_A(H)$.
In fact we prove the slightly stronger fact that $E\setminus D+e_1+e_2$ cannot be covered by two bases of $M_F(H)$, that is by two spanning trees $T_1$ and $T_2$ of $H$.

Let $H_n$ be the subgraph of $H$ consisting of those $2n$ vertices that have the least distance to $r_1$.
Choose $n$ large enough so that $e_1,e_2\in H_n$. 
An induction argument shows that $E\setminus D$ has $4n-3$ edges in $H_n$ 
since $E\setminus D$ has $1$ edge in $H_1$ and $E\setminus D$ has $4$ edges is $H_n\setminus H_{n-1}$.
On the other hand, $T_1 \cup T_2$ can have at most $2(2n-1)$ edges in $H_n$ since $H_n$ has $2n$ vertices.
This shows that $T_1$ and $T_2$ cannot cover $E\setminus D+e_1+e_2$ because they cannot cover $H_n\setminus D+e_1+e_2$.
So for any $e\in D$ the set $E\setminus D+e$ is a base of $M$, proving the assumption.
\end{proof}

As $|C\cap D|=\infty$, this completes the proof of \autoref{union_wild}.

In the previous section, we were able to generalise our example and give a
necessary condition under which $M^+$ is wild.
Here, we do not see a way to do this, because the description of $C$ and $D$ made heavy use of the structure of $M$.
It would be nice to have a large class of matroids $M$, as in the previous
section, such that $M\vee M$ is wild.

\begin{oque}
For which matroids $M$ is $M\vee M$ wild?
\end{oque}

\section{A thin sums matroid whose dual isn't a thin sums matroid}
The constructions introduced so far give us examples of matroids which are wild, and so badly behaved. We therefore believe they will be a fruitful source of counterexamples in matroid theory. In this section, we shall illustrate this by giving a counterexample for a very natural question. 

First we recall the notion of a thin sums matroid. 

\begin{dfn}
Let $A$ be a set, and $k$ a field. Let $f = (f_e | e \in E)$ be a family of functions from $A$ to $k$, and let $\lambda = (\lambda_e | e \in E)$ be a family of elements of $k$. We say that $\lambda$ is a {\em thin dependence} of $f$ iff for each $a \in A$ we have $$\sum_{e \in E} \lambda_e f_e(a) = 0 \, ,$$
where the equation is taken implicitly to include the claim that the sum on the left is well defined, that is, that there are only finitely many $e \in E$ with $\lambda_e f_e(a) \neq 0$. 

We say that a subset $I$ of $E$ is {\em thinly independent} for $f$ iff the only
thin dependence of 
$f$ which is 0 everywhere outside $I$ is $(0 | e \in E)$. The {\em thin sums
system} $M_f$ of $f$ is the set of such thinly independent sets. This isn't
always the set of independent sets of a matroid \cite{matroid_axioms}, but when
it is we call it the {\em thin sums matroid} of $f$.
\end{dfn}

This definition is deceptively similar to the definition of the representable matroid corresponding to $f$ considered as a family of vectors in the $k$-vector space $k^A$. The difference is in the more liberal definition of dependence: it is possible for $\lambda$ to be a thin dependence even if there are infinitely many $e \in E$ with $\lambda_e \neq 0$, provided that for each $a \in A$ there are only finitely many $e \in E$ such that {\em both} $\lambda_e \neq 0$ and $f_e(a) \neq 0$. 

Indeed, the notion of thin sums matroid was introduced as a generalisation of the notion of representable matroid: every representable matroid is finitary, but this restriction does not apply to thin sums matroids. Thus, although it is clear that the class of representable matroids isn't closed under duality, the question of whether the class of thin sums matroids is closed under duality remained open. It is shown in \cite{THINSUMS} that the class of tame thin sums matroids is closed under duality, so that any counterexample must be wild. We show below that one of the wild matroids we have constructed does give a counterexample.

There are many natural examples of thin sums matroids: for example, the algebraic cycle matroid of any graph not including a subdivision of the Bean graph is a thin sums matroid, as follows:

\begin{dfn}\label{rep_A}
Let $G$ be a graph with vertex set $V$ and edge set $E$, and $k$ a field. We can pick a direction for each edge $e$, calling one of its ends its {\em source} $s(e)$ and the other its {\em target} $t(e)$. Then the family $f^G = (f^G_e | e \in E)$ of functions from $V$ to $k$ is given by $f_e = \chi_{t(e)} - \chi_{s(e)}$, where for any vertex $v$ the function $\chi_v$ takes the value 1 at $v$ and 0 elsewhere.
\end{dfn}

\begin{thm}
Let $G$ be a graph not including any subdivision of the Bean graph. Then $M_{f^G}$ is the algebraic cycle matroid of $G$.
\end{thm}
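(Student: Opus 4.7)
The plan is to show that for $f^G$ the thinly independent sets are exactly the edge sets containing no finite cycle and no double ray; combined with the hypothesis (which via Higgs's characterisation guarantees that $M_A(G)$ is a matroid), this identifies $M_{f^G}$ with $M_A(G)$.

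First I would check that every finite cycle $v_0 e_1 v_1 \ldots e_n v_n = v_0$ supports a non-trivial thin dependence: set $\lambda_{e_i} = +1$ if $e_i$ is oriented from $v_{i-1}$ to $v_i$ and $-1$ otherwise, and $\lambda_e = 0$ on all other edges. At each vertex, the only non-zero contributions come from the at most two cycle-edges incident there, and those contributions cancel by construction. The same prescription, with $\lambda_{e_i} = \pm 1$ chosen so that the two contributions at each vertex of the ray cancel, works for a double ray: only at most two summands are non-zero at any vertex of $G$, so the sum is a well-defined thin dependence. Hence every finite cycle and every double ray is thinly dependent, and so is every set containing one.

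For the converse, let $\lambda$ be a non-zero thin dependence and set $I' = \{e \in E : \lambda_e \neq 0\}$. For each $v \in V$, the identity $\sum_{e \in E} \lambda_e f^G_e(v) = 0$ reduces to $\sum_{e \in I',\, v \in e} \pm \lambda_e = 0$, in which every term is non-zero; since a single non-zero summand cannot vanish, every vertex incident to an edge of $I'$ is incident to at least two such edges. Starting from any $e_0 = uv \in I'$, this lower bound on degree lets me greedily extend a walk from $v$, at each step leaving the current vertex along an edge of $I'$ different from the one by which I arrived; similarly from $u$. Either a walk closes into a finite cycle inside $I'$, or both walks trace out one-way rays: in the latter case, if the two rays share a vertex then I again extract a finite cycle, and otherwise their union with $e_0$ is a double ray in $I'$. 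Either way the support of $\lambda$ contains a circuit of $M_A(G)$, so every thinly independent set is $M_A(G)$-independent.

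Combining the two directions identifies the thinly independent sets for $f^G$ with the independent sets of $M_A(G)$, yielding the theorem. I expect the main obstacle to lie in the converse step, namely the assertion that a subgraph in which every non-isolated vertex has degree at least two must contain a finite cycle or a double ray. The greedy walk handles this cleanly in the simply-connected case, but one has to be a little careful when the two one-way rays meet or when parallel edges are present; everything else is essentially a direct unpacking of the definition of thin dependence.
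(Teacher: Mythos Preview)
The paper does not give its own proof here: immediately after the statement it records that the theorem ``is proved in \cite{THINSUMS}''. There is thus no in-paper argument to compare against, and your proposal supplies a proof where the paper merely cites one.

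Your argument is the natural one and is correct in outline. The only small slip concerns loops: for a loop $e$ at $v$ one has $f^G_e = \chi_{t(e)} - \chi_{s(e)} = \chi_v - \chi_v = 0$, so such an edge contributes a \emph{zero} term to the vertex equation at $v$, contrary to your claim that every summand in $\sum_{e\in I',\,v\in e}\pm\lambda_e$ is non-zero. This is harmless, since a loop is itself a finite cycle; simply dispose of that case first and then run the minimum-degree argument on the loopless part of $I'$. With that adjustment your converse step goes through: the support of any non-trivial thin dependence is a multigraph in which every non-isolated vertex has degree at least two, and such a graph contains a finite cycle or a double ray by exactly the greedy-walk argument you sketch.
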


This theorem, which motivated the defintion of $M_f$, is proved in \cite{THINSUMS}. 

For the rest of this section, $M$ will denote the algebraic cycle matroid for the graph $G$ in \autoref{fig:labelled_ladder}, 
in which we have assigned directions to all the edges and labelled them for future reference. 
We showed in the Section \ref{+} that $M^+$ is wild. We shall devote the rest of
this Section \ref{+} to showing that in fact it gives an example of a thin sums
matroid whose dual isn't a thin sums matroid.

   \begin{figure} [htpb]   
\begin{center}
   	  \includegraphics[width=10cm]{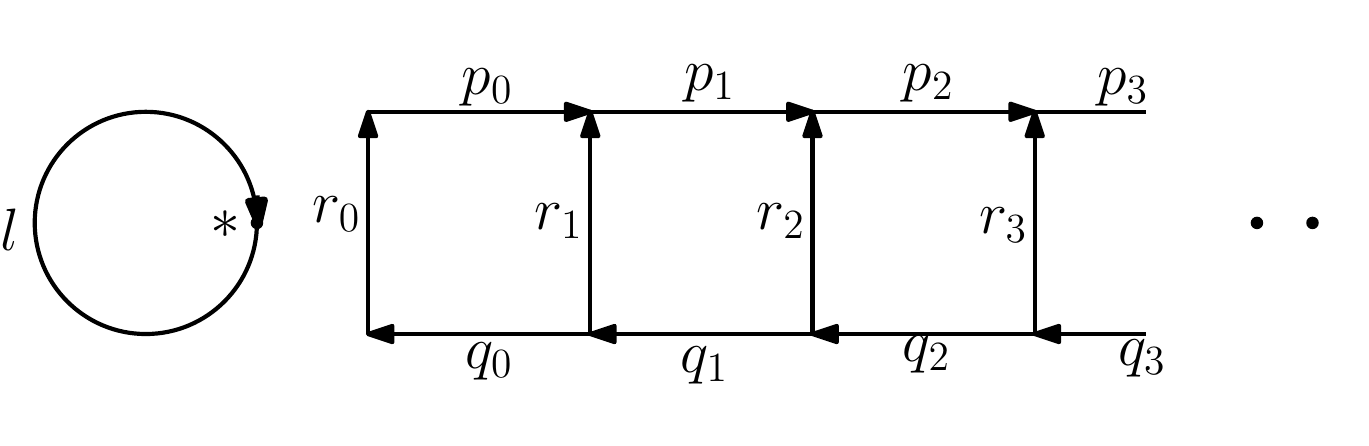}
   	  \caption{The graph $G$}
   	  \label{fig:labelled_ladder}
\end{center}
   \end{figure}

As usual, we denote the vertex set of $G$ by $V$ and the edge set by $E$. We call the unique vertex lying on the loop at the left $*$.

\begin{thm}
$M^+$ is a thin sums matroid over the field $\Qbb$.
\end{thm}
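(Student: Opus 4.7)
The plan is to exhibit an explicit thin sums representation $f^+$ of $M^+$ over $\Qbb$, built by augmenting the standard algebraic cycle representation $f^G$ of $M = M_A(G)$ from \autoref{rep_A}. Take $A^+ = V \cup C$ for some set $C$ of new coordinates, and on $V$ set $f^+_e = f^G_e = \chi_{t(e)} - \chi_{s(e)}$; the values $f^+_e(c)$ for $c \in C$ are parameters to be chosen explicitly using the edge labelling of \autoref{fig:labelled_ladder} and, in particular, the loop at $*$.

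The design is guided by \autoref{circ+}: every $M^+$-circuit is a union $O_1 \cup O_2$ of an $M$-circuit $O_1$ and an $M/O_1$-circuit $O_2$. The thin dependences of $f^G$ are exactly the flows on $G$, whose minimal supports are the $M$-circuits, and any flow supported on $O_1 \cup O_2$ is a combination $\alpha \lambda^{(1)} + \beta \lambda^{(2)}$ of the (essentially unique) flows $\lambda^{(i)}$ on the two pieces. The $f^+_e(c)$ must therefore be chosen so that (i) for each single $M$-circuit $O$, the pairing $\sum_e \lambda^{(O)}_e f^+_e(c)$ is nonzero for some $c \in C$, so that $\lambda^{(O)}$ is no longer a thin dependence of $f^+$ and $O$ becomes thinly independent; (ii) for each $M^+$-circuit $O_1 \cup O_2$, there are nonzero $\alpha,\beta \in \Qbb$ with $\alpha \sum_e \lambda^{(1)}_e f^+_e(c) + \beta \sum_e \lambda^{(2)}_e f^+_e(c) = 0$ for every $c \in C$, giving a thin dependence of $f^+$ with support exactly $O_1 \cup O_2$; and (iii) each such sum has only finitely many nonzero terms, so that the dependence is well-defined. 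Once (i)--(iii) hold, the identification of the thin sums matroid of $f^+$ with $M^+$ will follow from \autoref{circ+} together with the observation that any thin dependence of $f^+$ is \emph{a fortiori} a flow on $G$, and hence a rational combination of $M$-circuit flows subject to the extra linear constraints imposed by the $c \in C$.

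The main obstacle is the tension between (i) and (iii): to rule out \emph{every} $M$-circuit as a dependence---including infinite ones such as any double rays of $G$---the weights $f^+_e(c)$ cannot collectively be supported on a finite subset of $E$, and yet each individual sum $\sum_e \lambda_e f^+_e(c)$ must be finite for well-definedness. The explicit structure of the labelled ladder $G$ is what lets us thread this needle: the loop at $*$ gives us a ``free'' extra coordinate that makes $\{\ell\}$ thinly independent and gives leverage on each $M$-circuit meeting the left end, while the recursive geometry of the ladder allows the remaining weights to be chosen rung by rung so that they detect every $M$-circuit but produce only finitely many nonzero contributions against the two-circuit combinations that are the candidate $M^+$-dependences. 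Once such a choice is made, verifying (i), (ii) and (iii) is a direct combinatorial computation on flows in $G$, using the labels of \autoref{fig:labelled_ladder}.
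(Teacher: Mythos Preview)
Your plan is the right shape and matches the paper's approach: augment the algebraic-cycle representation $f^G$ by extra linear constraints so that single $M$-circuit flows are no longer thin dependences while two-circuit unions still are. But what you have written is a strategy, not a proof. The explicit choice of the values $f^+_e(c)$ and the verification of your conditions (i)--(iii) are the entire content of the argument, and you have deferred both to an unperformed ``direct combinatorial computation''. The paper carries this out with a \emph{single} extra constraint, and in fact adds no new coordinates: it keeps $A=V$ and simply perturbs the values at the loop-vertex $*$, setting $f_l=\chi_*$ and $f_{r_i}=f^G_{r_i}+i\cdot\chi_*$. With one linear functional your condition (ii) is automatic (a codimension-one kernel meets any two-dimensional span of flows nontrivially), so the real work is (i), and the weight $i$ on $r_i$ is precisely what makes the finite-cycle case go through (the constraint at $*$ becomes $(m-n)\lambda_{r_m}=0$ with $m\neq n$). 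Your proposal never commits to $|C|=1$; with $|C|\geq 2$ and independent constraints condition (ii) can genuinely fail, so even the architecture of your plan needs this decision made before any computation can start.

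One further wrinkle: your step ``any thin dependence of $f^+$ is a flow on $G$, hence a rational combination of $M$-circuit flows'' is neither needed nor obviously true for infinite $G$. The paper avoids it: from the flow condition one gets that every vertex has degree $0$ or $\geq 2$ in the support $D'$, so $D'$ contains some $M$-circuit $O$; if $D'=O$ one derives a contradiction directly from the equation at $*$, and otherwise any edge of $D'\setminus O$ lies in a second $M$-circuit $O'\subseteq D'$, whence $O\cup O'\subseteq D'$ is $M^+$-dependent by \autoref{2circs}. You should replace the ``rational combination'' claim with this support argument.
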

\begin{proof}
We begin by specifying the family $(f_e | e \in E)$ of functions from $V$ to $\Qbb$ for which $M^+ = M_f$. We take $f_e$ to be $f^G_e$ as in \autoref{rep_A} if $e$ is one of the $p_i$ or $q_i$, to be $\chi_*$ if $e = l$, and to be $f^G_e + i\cdot \chi_*$ if $e = r_i$.

First, we have to show that every circuit of $M^+$ is dependent in $M_f$. There are a variety of possible circuit types: in fact, types $(b)$, $(c)$, $(e)$ and $(f)$ from Figures \ref{fig:fin_cyc} and \ref{fig:alg_cyc} can arise. We shall only consider type $(f)$: the proofs for the other types are very similar. \autoref{fig:special_circuit} shows the two ways a circuit of type $(f)$ can arise.

   \begin{figure} [htpb]   
\begin{center}
   	  \includegraphics[width=10cm]{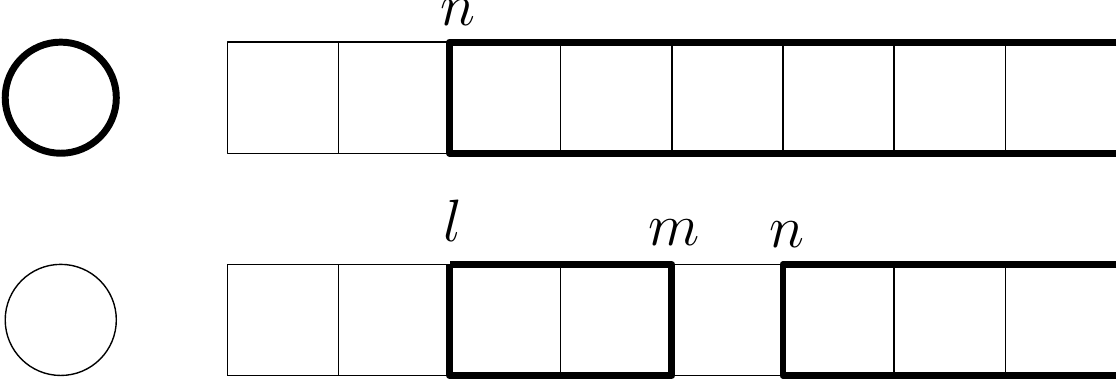}
   	  \caption{The two ways of obtaining a circuit of type $(f)$}
   	  \label{fig:special_circuit}
\end{center}
   \end{figure}

The first includes the edge $l$, together with $r_n$ for some $n$ and all those $p_i$ and $q_i$ with $i \geq n$. We seek a thin dependence $\lambda$ such that $\lambda$ is nonzero on precisely these edges.

We shall take $\lambda_{r_n} = 1$. We can satisfy the equations $\sum_{e \in E}
\lambda_ef_e(v)$ with $v \neq *$ 
by taking $\lambda_{p_i} = \lambda_{q_i} = 1$ for all $i \geq n$. The equation
$\sum_{e \in E} \lambda_e f_e(*) = 0$ reduces to $\lambda_* + n\lambda_{r_n} =
0$, which we can satisfy by taking $\lambda_* = -n$. It is immediate
that this gives a thin dependence of $f$. 

The second way a circuit of type $(f)$ can arise includes the edges $r_l$, $r_m$ and $r_n$, together with those $p_i$ and $q_i$ with either $l \leq i < m$ or $n \leq i$. We seek a thin dependence $\lambda$ such that $\lambda$ is nonzero on precisely these edges. 

The equations $\sum_{e \in E} \lambda_ef_e(v)$ with $v \neq *$ may be satisfied by taking $\lambda_{p_i} = \lambda_{q_i} = \lambda_{r_l} = -\lambda_{r_m}$ for $l \leq i \leq m$ and $\lambda_{p_i} = \lambda_{q_i} = \lambda_{r_n}$ for $i \geq n$. The equation $\sum_{e \in E} \lambda_e f_e(*) = 0$ reduces to $l\lambda_{r_l} + m \lambda_{r_m} + n \lambda_{r_n} = 0$, which since $\lambda_{r_m} = - \lambda_{r_l}$ reduces further to $(m - l)\lambda_{r_m} = n\lambda_{r_n}$. We can satisfy this equation by taking $\lambda_{r_m} = n$ and $\lambda_{r_n} = m - l$. 
Taking the remaining $\lambda_e$ to be given as above then gives a thin dependence of $f$.
Note that $\lambda\neq 0$ since $m\neq l$ and thus $\lambda_{r_n}\neq0$. 

Next, we need to show that every dependent set of $M_f$ is also dependent in $M^+$, completing the proof. Let $D$ be such a dependent set, as witnessed by a nonzero thin dependence $\lambda$ of $f$ which is 0 outside $D$. Let $D' = \{e | \lambda_e \neq 0\}$, the {\em support} of $D$. Using the equations $\sum_{e \in E} \lambda_ef_e(v)$ with $v \neq *$, we may deduce that the degree of $D'$ at each vertex (except possibly $*$) is either 0 or at least 2. Therefore any edge (except possibly $l$) contained in $D'$ is contained in some circuit of $M$ included in $D'$. Since $\{l\}$ is already a circuit of $M$, we can even drop the qualification `except possibly $l$'.

Since $D'$ is nonempty, it must include some circuit $O$ of $M$. Suppose first of all for a contradiction that $D' = O$. The intersection of $D'$ with the set $\{l\} \cup \{r_i | i \in \Nbb_0\}$ is nonempty, so by the equation $\sum_{e \in E}\lambda_e f_e(*) = 0$ this intersection must have at least 2 elements. The only way this can happen with $D'$ a circuit is if there are $m < n$ such that $D'$ consists of $r_m$, $r_n$, and the $p_i$ and $q_i$ with $m \leq i < n$. We now deduce, since $\lambda$ is a thin dependence, that $\lambda_{p_i} = \lambda_{q_i} = \lambda_{r_m} = -\lambda_{r_n}$ for $m \leq i \leq n$. In particular, the equation $\sum_{e \in E} \lambda_e f_e(*) = 0$ reduces to $(m - n) \lambda_{r_m} = 0$, which is the desired contradiction as by assumption $\lambda_{r_m} \neq 0$ and $m<n$. Thus $D' \neq O$, and we can pick some $e \in D \setminus O$. As above, $D'$ includes some  $M$-circuit $O'$ containing $e$. Then the union $O \cup O' \subseteq D$ is $M^+$-dependent by Corollary \ref{2circs}. 
\end{proof}

\begin{thm}
$(M^+)^*$ is not a thin sums matroid over any field.
\end{thm}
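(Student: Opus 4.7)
Suppose for contradiction that $(M^+)^*=M_g$ for some family $g=(g_e)_{e\in E}$ of functions $A\to k$ over some field $k$. By the description of $M^-$-circuits given in \autoref{+} applied to $M^*$, the circuits of $(M^+)^*=(M^*)^-$ are exactly the cobases of $M$ (which are infinite), together with the cocircuits of $M$ not containing a cobase. Since $G$ is locally finite, $M=M_A(G)$ is cofinitary, so the latter are all finite. The plan is to extract enough information from both classes to exhibit a set that is simultaneously independent in $(M^+)^*$ and dependent in $M_g$.

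First I pin down the supports $S_a:=\{e:g_e(a)\neq 0\}$. For any circuit $C$ of $M_g$, the thin dependence supported on $C$ has $C\cap S_a$ finite for every $a\in A$. Applied to the three cobases $\{l\}\cup\{r_i\mid i\ge 0\}$, $\{l\}\cup\{p_i\mid i\ge 0\}$ and $\{l\}\cup\{q_i\mid i\ge 0\}$ (the complements of the easily verified $M$-bases $\{p_i,q_i\mid i\ge 0\}$, $\{q_i,r_i\mid i\ge 0\}$ and $\{p_i,r_i\mid i\ge 0\}$), this shows that $S_a$ is finite for every $a$. Next, the finite cocircuits of $M$ yield linear relations among the $g_e$: the bond $\{p_n,q_n\}$ forces $g_{q_n}$ proportional to $g_{p_n}$; the bond $\{p_0,r_0\}$ at the vertex $u_0$ forces $g_{r_0}=d_0\,g_{p_0}$ with some $d_0\neq 0$; and for each $n\ge 1$ the bond $\{p_{n-1},p_n,r_n\}$ at $u_n$ forces $g_{r_n}=a_n\,g_{p_{n-1}}+b_n\,g_{p_n}$ with $a_n,b_n\neq 0$ (all coefficients must be nonzero because the circuits are minimal). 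The short but nontrivial verification that these bonds are cocircuits of $M_A(G)$ and not merely of the finite cycle matroid is the main technical point: one checks directly, using the closure operator, that the complementary sets remain hyperplanes once double rays are admitted as circuits.

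To conclude, consider $R:=\{r_n\mid n\ge 0\}$. I claim $R$ is $(M^+)^*$-independent. No cobase of $M$ lies in $R$, because every cobase contains the loop $l$. No cocircuit of $M$ lies in $R$ either: for any $D\subseteq R$ and any $r_i\in D$, the double ray $\{r_i\}\cup\{p_j,q_j\mid j\ge i\}$ is a circuit of $M$ contained in $(E\setminus D)+r_i$, so $E\setminus D$ spans $M_A(G)$ and is not a hyperplane. On the other hand, using the formulas above, set $\lambda_{r_0}:=1$, $\lambda_{r_1}:=-d_0/a_1$, and recursively $\lambda_{r_{n+1}}:=-(b_n/a_{n+1})\,\lambda_{r_n}$ for $n\ge 1$; each $\lambda_{r_n}$ is nonzero because $a_n,b_n,d_0$ are. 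Since $S_a$ is finite, the sum $\sum_n\lambda_{r_n}g_{r_n}(a)$ is finite for each $a$, and substituting the expressions for $g_{r_n}$ it regroups as
$$(\lambda_{r_0}d_0+\lambda_{r_1}a_1)\,g_{p_0}(a)+\sum_{n\ge 1}(\lambda_{r_n}b_n+\lambda_{r_{n+1}}a_{n+1})\,g_{p_n}(a),$$
which vanishes by the choice of $\lambda$. Thus $\lambda$ is a nonzero thin dependence of $g$ supported on $R$, so $R$ is dependent in $M_g$, contradicting its independence in $(M^+)^*$.
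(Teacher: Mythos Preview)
Your proof is correct and follows the same overall strategy as the paper: use the small skew-cut cocircuits $\{r_0,p_0\}$ and $\{p_{n-1},r_n,p_n\}$ to express each $g_{r_n}$ as a combination of $g_{p_{n-1}}$ and $g_{p_n}$, then telescope these relations to build a nonzero thin dependence supported on $R=\{r_n\mid n\ge 0\}$, which is shown to be independent in $(M^+)^*$.

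The one noteworthy difference is in how you establish that the relevant sums are thin. The paper uses a single cobase (the bold edges of \autoref{fig:second_wild}) to conclude that $\{i:g_{p_i}(a)\neq 0\}$ is finite for each $a$, and then argues inductively via the recursion $\nu_i g_{p_i}(a)=\sum_{j\le i}\mu_j g_{r_j}(a)$ that $\{i:g_{r_i}(a)\neq 0\}$ is finite as well. You instead observe that the three cobases $\{l\}\cup\{r_i\}$, $\{l\}\cup\{p_i\}$, $\{l\}\cup\{q_i\}$ together cover $E$, so every support $S_a$ is outright finite; this makes the subsequent regrouping of the infinite sum immediate and is a clean simplification. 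Your direct verification that $R$ is independent (no cobase lies in $R$ since each contains $l$, and no cocircuit lies in $R$ since $E\setminus R$ already spans every $r_i$ via the double ray through it) is also slightly more elementary than the paper's remark that $R$ is in fact an $(M^*)^-$-base. The relation from $\{p_n,q_n\}$ that you record is correct but unused in your final computation.
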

\begin{proof}
Suppose for a contradiction that it is a thin sums matroid $M_f$, with $f$ a family of functions $A \to k$. For each circuit $O$ of $(M^+)^*$, we can find a nonzero thin dependence $\lambda$ of $f$ which is nonzero only on $O$ - it must be nonzero on the whole of $O$ by minimality of $O$.

The circuits of $(M^+)^* = (M^*)^-$ are precisely the circuits and the bases of $M^*$, the dual of the algebraic cycle matroid of $G$, since no circuit in $M^*$ includes a base. This dual $M^*$, called the {\em skew cuts} matroid of $G$, is known to have as its circuits those cuts of $G$ which are minimal subject to the condition that one side contains no rays.

Thus since $\{r_0, p_0\}$ is a skew cut, we can find a thin dependence $\lambda^0$ which is nonzero precisely at $r_0$ and $q_0$. Similarly, for each $i > 0$ we can find a thin dependence $\lambda^i$ which is nonzero precisely at $q_{i-1}$, $r_i$ and $q_i$. Since the set of bold edges in \autoref{fig:second_wild} is also a circuit of $(M^+)^*$, there is a thin dependence $\lambda$ which is nonzero on precisely those edges.

To obtain a contradiction, we will show that $\{r_i | i \in \Nbb\}$ is dependent in $M_f$.
The idea behind the following calculations is to consider $\{r_i | i \in \Nbb\}$ as the limit 
of the $M_f$-circuits $\{r_i | 0\leq i\leq n\}\cup \{p_n\}$ and then to use the properties of thin sum representations
to show that the ``limit'' $\{r_i | i \in \Nbb\}$ inherits the dependence.

Now define the sequences $(\mu_i| i \in \Nbb)$ and $(\nu_i | i \in \Nbb)$ inductively by $\nu_0 = 1$, $\nu_i = -(\lambda^i_{p_i}/\lambda^i_{p_{i-1}})\nu_{i-1}$ for $i > 0$ and $\mu_i = -(\lambda^i_{r_i} / \lambda^i_{p_i}) \nu_i$. Pick any $a \in A$. Then we have $0 = \sum_{e \in E}\lambda^0_ef_e(a) = \lambda^0_{r_0} f_{r_0}(a) + \lambda^0_{p_0} f_{p_0}(a)$, and rearranging gives
$$\nu_0f_{p_0}(a) = \mu_0f_{r_0}(a) \, .$$

Similarly, $0 = \sum_{e \in E}\lambda^i_ef_e(a) = \lambda^i_{p_{i-1}} f_{p_{i-1}}(a) +  \lambda^i_{r_i} f_{r_i}(a) + \lambda^i_{p_i} f_{p_i}(a)$, and rearranging gives
$$\nu_if_{p_i}(a) = \nu_{i-1}f_{p_{i-1}}(a)  + \mu_if_{r_i}(a) \, .$$
So by induction on $i$ we get the formula
$$\nu_if_{p_i}(a) = \sum_{j = 0}^i \mu_jf_{r_j}(a) \, .$$

The formula $\sum_{e \in E} \lambda_e f_e(a) = 0$ implicitly includes the statement that the sum is well defined, 
so only finitely many summands can be nonzero. In particular, there can only be finitely many $i$ for which $f_{p_i}(a) \neq 0$. 
It then follows by the formula above that there are only finitely many $i$ such that $f_{r_i}(a)$ is nonzero, 
since if $f_{r_i}\neq 0$, then as $\mu_i\neq0$ we have $\nu_i f_{p_i}(a)\neq \nu_{i-1}f_{p_{i-1}}(a)$.
So as $\nu_i\neq 0$ and $\nu_{i-1}\neq 0$, one of $f_{p_i}(a)$ or $f_{p_{i-1}}(a)$ is not equal to zero.
Therefore all but finitely many $f_{r_i}(a)$ are zero since all but finitely
many $f_{p_i}(a)$ are zero.
So the following sum is well defined and evaluates to zero.

$$\sum_{i = 0}^{\infty} \mu_i f_{r_i}(a) = 0 \, .$$

Therefore, if we define a family $(\lambda'_e | e \in E)$ by $\lambda'_{r_i} = \mu_i$ and $\lambda'_e = 0$ for other values of $e$, then we have
$$\sum_{e \in E}\lambda'_e f_e(a) = 0 \, .$$

Since $a \in A$ was arbitrary, this implies that $\lambda'$ is a thin dependence of $f$. 
Note that $\lambda'\neq0$ since $\lambda'_{r_0}\neq0$.
Thus the set $\{r_i | i \in \Nbb\}$ is dependent in $M_f = (M^*)^-$. But it is
also an $(M^*)^-$basis, since adding $l$ 
gives a basis of $M^*$. This is the desired contradiction.
\end{proof}

\bibliographystyle{plain}
\bibliography{literatur}

\end{document}